\newcommand{\nc}{\newcommand}
\theoremstyle{plain}
\newtheorem{theorem}{\sc Theorem}[section]
\newtheorem{thm}[theorem]{\sc Theorem}
\newtheorem{lem}[theorem]{\sc Lemma}
\newtheorem{prop}[theorem]{\sc Proposition}
\newtheorem{cor}[theorem]{\sc Corollary}
\newtheorem{rem}[theorem]{\sc Remark}
\theoremstyle{definition}
\newtheorem{define}[theorem]{\sc Definition}
\renewcommand{\refname}{\center{REFERENCES}}
\newcommand{\vfi}{\varphi}
\newcommand{\Z}{\mathbb{Z}}
\nc{\GG}{\mathfrak{G}}
\nc{\LL}{\mathcal{L}}
\nc{\JJ}{\mathcal{J}}
\nc{\fN}{\mathfrak{N}}
\nc{\fH}{\mathfrak{H}}
\nc{\fF}{\mathfrak{F}}
\nc{\cH}{\mathcal{H}}
\nc{\cG}{\mathcal{G}}
\nc{\cN}{\mathcal{N}}
\nc{\cK}{\mathcal{K}}
\nc{\cT}{\mathcal{T}}
\nc{\pgs}{\mathfrak{p}\mathfrak{g}\mathfrak{s}}
\nc{\pcr}{\mathfrak{p}\mathfrak{c}\mathfrak{r}}
\nc{\wg}{\widehat{g}}
\nc{\wh}{\widehat{h}}
\nc{\wx}{\widehat{x}}
\nc{\wy}{\widehat{y}}
\nc{\wk}{\widehat{k}}
\nc{\Char}{\operatorname{char}}
\nc{\Ker}{\operatorname{Ker}}
\nc{\Imm}{\operatorname{Im}}
\nc{\Aut}{\operatorname{Aut}}
\nc{\Cl}{\operatorname{Cl}}
\nc{\Orb}{\operatorname{Orb}}
\nc{\noreq}{\trianglelefteq}
\nc{\lcm}{\operatorname{lcm}}
\title[The non-abelian Tensor Square of Finite Metacyclic Groups]
{The non-abelian Tensor Square of Finite Metacyclic Groups}
\author[Canella]{Juliana Silva Canella}
\address{Institute of Exact and Natural Sciences, 
Universidade Federal do Par\'a, Par\'a-PA, 66075-110 Brazil}
\email{jscanella@ufpa.br}
\author[Rocco]{Nora\'i Romeu Rocco}
\address{Departamento de Matem\'atica-IE, Universidade de Bras\'ilia,
Bras\'ilia-DF, 70910-900 Brazil}
\email{norai@unb.br}
\subjclass[2020]{20-08, 20F05, 20D45}
\keywords{Non-abelian tensor square, Metacyclic groups, Commutators}
\date{\today}
\begin{document}

\begin{abstract}
In this paper, we investigate the group $\nu(G)$, an extension of the non-abelian tensor square $G$ by the direct product $G\times G$, in order to determine a presentation of  $G \otimes G$ when $G$ is a general finite metacyclic group,   $G=g(a,b; m,n, r, s)$, with $m$ odd. A presentation of $\nu(G)$ is obtained from that of $G$ and, consequently, we describe the appropriate relevant  sections of $\nu(G)$, such as the non-abelian tensor square, the exterior square $G \wedge G$ and the Schur multiplier, $M(G)$.
\end{abstract}
\maketitle
\section{Introduction} 

The {\it non-abelian tensor square} $G \otimes G$ of a group $G$ as introduced by Brown and Loday \cite{BL1, BL} is defined to be the group generated by all symbols $g \otimes h, \, g, h \in 
 G$, subject
to the relations
$$g{g_1} \otimes h = (g^{g_1} \otimes h^{g_1} ) (g_1 \otimes h) \quad \text{and} \quad  g \otimes h {h_1} = (g \otimes  h_1)(g^{h_1} \otimes h^{h_1})$$
for all $g, g_1, h, h_1 \in G.$ Here and in the sequel, for elements $x, y$ in a group $G$ we write $x^y$ to denote the conjugate $y^{-1} x y$; the commutator of $x$ and $y$ is then the element $[x, y] := x^{-1} x^y$ and our commutators are left normed, $[x, y ,z] := [[x,y], z]$ for any $x, y, z \in G.$

The group $G \otimes G$ is a particular case of a non-abelian tensor product $G \otimes H$, of groups $G$ and $H$, under the assumption that $G$ and $H$ acts one on each other and on itself by conjugation. 

Given its relationship to several other relevant homology functors, the computation of this object for a given group $G$ is of great interest.

The study of the non-abelian tensor square from a group theoretical point of view started with Brown, Johnson and Robertson in \cite{BJR}. In that paper the authors compute $G \otimes G$ for all non-abelian groups of order up to $30$ using Tietze transformations in order to simplify the presentations. This method, however, is not appropriate for larger groups, since it involves $|G|^2$ generators and $2|G|^3$ relations. 

A method that has been used is based on the concept  of a \textit{biderivation} (or a \textit{crossed pairing}): let $G$ and $L$ be groups; 
a {\it biderivation} from $G \times G$ to $L$ is a function $f: G \times G \to L$ such that
\begin{center}
    (i) $f(aa_1,b) = f(a^{a_1},b^{a_1})f(a_1,b);$   \\[2mm]
    (ii) $f(a,bb_1) = f(a,b_1)f(a^{b_1},b^{b_1})$
\end{center}
for all $a,a_1,b,b_1 \in G$. Any biderivation $f: G \times G \to L$ determines a unique  homomorphism $$f^*: G \otimes G \to L$$ such that the diagram \begin{displaymath}
  \xymatrix{
    {G \times G} \ar[rr]^{j} \ar[drr]_{f}
    && {G \otimes G} \ar[d]^{f^*}\\
    && {L} 
  }
\end{displaymath}
is commutative, where $j: G \times G \to G \otimes G, \; (g, h) \mapsto g \otimes h$ is the canonical biderivation arising from the definition of $G \otimes G.$ Therefore, to determine the non-abelian tensor square via biderivations, as done by many authors \cite{Bacon, BK, BKM, Kappe1, BJR, Johnson, Kappe, MM, V}, means to conjecture both the group $L$ and the biderivation $f$, which in general is not an easy task.

On the other hand, since the defining relations of $G \otimes G$ are abstractions of commutator relations in groups, in \cite{Rocco1} the second named author suggested another way to obtain the non-abelian tensor square by making use of the following construction.
Let $G$ be a group and let $G^{\varphi}$ be an isomorphic copy of $G$ via $\varphi : g \mapsto g^{\varphi}, \, \forall g \in G$. Then we define the group $\nu(G)$ to be
\[
\nu(G):= \langle G \cup G^{\varphi} \mid [g_1,g_2^{\varphi}]^{g_3}=[g_1^{g_3},(g_2^{g_3})^{\varphi}] =[g_1,g_2^{\varphi}]^{g_3^{\varphi}}, g_i \in G\rangle.
\]
In \cite{EL}, Ellis and Leonard considered a similar construction. According to  \cite[Proposition 2.6]{Rocco1}, the subgroup $\Upsilon(G)=[G,G^{\varphi}]$ of $\nu(G)$ is isomorphic to the non-abelian tensor square $G \otimes G$, which explains the initial motivation to study the group $\nu(G)$. Actually, the group $\nu(G)$ admits the decomposition  $\nu(G) = (\Upsilon(G) \cdot G) \cdot G^{\varphi}$, where the dots mean (internal) semidirect products. Moreover, thanks to the work of several mathematicians, this construction has become powerful machinery for computing and handling the non-abelian tensor square $G \otimes G$, specially when the group $G$ is polycyclic (see \cite{BFM, BlyMor, EN, EL, Rocco2}).  

The \textit{diagonal subgroup} $\Delta(G) := \langle [g, g^{\varphi}] \mid g \in G\rangle \leq [G,G^\vfi]$ is central in  $\nu(G)$ (see \cite[Remark 2]{Rocco2}). The section $[G, G^{\varphi}]/ \Delta(G)$  is isomorphic with the non-abelian exterior square  $G \wedge G$, studied by Miller in \cite{Miller} (\cite[p. 1984]{Rocco2}).

The evaluation of $G \otimes G$ for finite split metacyclic $G$ was carried out via biderivations by Brown, Johnson and Robertson in \cite{BJR}, and by Johnson in \cite{Johnson}; the same technique was used by Beuerle and Kappe in \cite{Kappe1} to compute $G \otimes G$ for infinite metacyclic groups $G$.   

The application $\rho: \nu(G) \to G$, $g\mapsto g$, $g^\varphi \mapsto g$, for every $g \in G$, induces an epimorphism of $\nu(G)$ over $G$ whose kernel, $\theta (G)$, centralizes
$[G, G^\varphi]$ in $\nu(G)$.  The restriction of $\rho$ to $[G, G^\varphi]$ is the \textit {derived map}, $\rho': [G, G^\varphi] \to G'$, $[g, h^\varphi] \mapsto [g, h]$ for all $g, h\in G$, whose kernel, $\mu(G) \; (= \Upsilon(G) \cap \theta(G))$, is then central in $\nu(G)$ and is quite related to Homology Theory \cite {BlyMor}; for instance, $\mu (G)/ \Delta (G)$ is isomorphic with the {\it Schur multiplier} of $G$, $M(G)$.

In this paper, we explore the commutator approach via the group $\nu(G)$ to determine both $\nu(G)$ and $G \otimes G$ for a general finite metacyclic $G$. As a consequence we  also describe the exterior square, $G\wedge G$ and the Schur multiplier, $M(G)$, as sections of $\nu(G)$. 

The paper is organized as follows. In Section 2 we establish some notation and properties involving a finite metacyclic group $G=g(a,b;m,n,r,s)$ (see Definition~\ref{def:metacyclic_grp}). In Section 3 we briefly describe the group $\nu(G)$ and establish some structural results. In Section 4 we give a presentation for group $\nu(G)$ and obtain other of its sections such as $G\otimes G$ and $M(G)$ for a finite metacyclic group $G$ with $m$ odd.

\section{Preliminaries} \label{sec:2}

In this section we establish some notation and address basic results concerning finite metacyclic groups. 

Let $m$ be a non-negative integer. The ring of integers modulo $m$ is denoted by $\mathbb{Z}_{m}$. For non-negative integers $\alpha$ and $\beta$, not both zero, let $(\alpha, \beta):= \gcd(\alpha, \beta)$ and $(0,0):= 0$; the $\lcm(\alpha, \beta)$ will be denoted by $[\alpha, \beta]$, whenever this does not cause confusion with commutators in groups. For $m > 1$, we write $\mathcal{U}_{m}$ for the set $\{r \in \mathbb{Z} \, \mid \, 1 \leq r \leq m-1 \; \text{and} \; \gcd(r,m) = 1\};$  if $m=0$, then $\mathcal{U}_{0}=\{-1, 1\}$. If $m > 1$ and $r \in \mathcal{U}_{m}$, let $t$ be the unique  integer in $\mathcal{U}_{m}$ such that $r t \equiv 1 \pmod m$. %

A {\it metacyclic group} is an extension of a cyclic group by a cyclic group. From now on we shall consider the following general finite metacyclic group fixed in the next definition. 
\begin{define} \label{def:metacyclic_grp}
Let $m$ and $n$ be positive integers. We define the group 
$G = g(a,b; m,n,r,s)$ to be the finite metacyclic group given by the power-conjugate presentation
\[
G:= \langle a, b \, \mid \, a^m=1, \; b^n=a^s, \; a^b=a^r \rangle, \quad \text{where} 
\] 
\begin{equation} \label{eq:metacyclic1} 
r^n \equiv 1\pmod m \quad \text{and} \quad s(r-1) \equiv 0 \pmod m.
\end{equation}
\end{define}
\noindent The above conditions on the 4-tuple of integers $(m,n,r,s)$ determines a consistent polycyclic presentation to define the  metacyclic group $G$ of order $mn$, a result due to H\"{o}lder [\cite{Za}, Theorem 20]. We also see that every element $g \in G$ can be written in the normal form $g = b^\beta a^\alpha$, where $0 \leq \alpha \leq m-1, \; 0 \leq \beta \leq n-1$; in particular we always chose the parameter $s$ such that $0 \leq s < m.$ From now on we fix this notation, according to Definition~\ref{def:metacyclic_grp}.

\begin{rem} \label{rem:one}    
For the purposes of this article we assume that our groups $G = g(a,b;m,n,r,s)$ are non-abelian (in particular, non-cyclic); thus we have $(m,s) \neq 1$ and $1 < r \in \mathcal{U}_m$. In addition, if $r = m-1$, then $m > 2$ and $n$ is even, regardless of the parity of $m$; also, if $m$ is odd, then $G = g(a, b; m, n, m-1, 0)$ is a split extension, because $m \mid s (m-2)$; this fact also follows immediately from the Schur-Zassenhaus Theorem \textnormal{[\cite{Rob}, \S $9.12$]}\label{SchurZas}. 
\end{rem}
As usual we write $H'$ for the derived subgroup $[H, H]$ and $Z(H)$ for the center of a group $H$. For an arbitrary element $x \in H$ we write $o(x)$ for the order of $x$ and $o'(x)$ for the order of the coset $xH'$ in the abelianized group $H^{\text{ab}} = H/H'$. 
\begin{lem} \label{lem:orders1}
Let $G = g(a,b;m,n,r,s)$ be a finite metacyclic group of order $mn$, $t= m/(m, r-1)$ and let $l = |r \pmod m|$ be the multiplicative order of $r \pmod m$. Then
\begin{enumerate}
\item[$(i)$] $o(a)  = m \quad \text{and} \quad  o(b) = n \dfrac{m}{(m,s)};$ \\
\item[$(ii)$] $G' = \langle a^{r-1} \rangle \cong C_t \quad \text{and} \quad Z(G) = \langle a^t, b^l \rangle;$ \\
\item[$(iii)$] $o'(a) = (m, r-1) \quad \text{and} \quad  o'(b) = n \dfrac{(m, [s,r-1])}{(m,s)}.$
\end{enumerate}
\end{lem}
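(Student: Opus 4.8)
The plan is to reduce everything to the single collection identity
$a^{i}b^{j}=b^{j}a^{\,i r^{j}}$ (valid for all $i\in\Z$, $j\ge 0$), which follows by an easy induction from $a^{b}=a^{r}$, i.e.\ $ab=ba^{r}$, together with $b^{-j}a^{i}b^{j}=a^{\,ir^{j}}$. I work throughout in the normal form $g=b^{\beta}a^{\alpha}$ with $0\le\alpha<m$, $0\le\beta<n$, and take the equality $|G|=mn$ (Hölder's consistency result quoted after Definition~\ref{def:metacyclic_grp}) as given. For $(i)$: since $\langle a\rangle\trianglelefteq G$ (because $a^{b}=a^{r}\in\langle a\rangle$) and $G/\langle a\rangle$ is cyclic, generated by $b\langle a\rangle$ with $(b\langle a\rangle)^{n}=a^{s}\langle a\rangle=1$, I bound $|\langle a\rangle|\le m$ and $|G/\langle a\rangle|\le n$; comparison with $|G|=mn$ forces both bounds to be equalities, so $o(a)=m$ and $b\langle a\rangle$ has order exactly $n$. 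Hence $o(b)$ is a multiple of $n$, say $o(b)=nk$ with $k$ minimal, and from $b^{nk}=(b^{n})^{k}=a^{sk}$ one gets $b^{nk}=1\iff m\mid sk\iff \frac{m}{(m,s)}\mid k$; the least such $k$ is $\frac{m}{(m,s)}$, giving $o(b)=n\frac{m}{(m,s)}$.

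For $(ii)$, the commutator $[a,b]=a^{-1}a^{b}=a^{r-1}$ shows $\langle a^{r-1}\rangle\le G'$, while modulo $\langle a^{r-1}\rangle$ the two generators commute (this subgroup is normal, being a subgroup of the abelian normal $\langle a\rangle$ stable under $b$-conjugation, as $(a^{r-1})^{b}=(a^{r-1})^{r}$), so $G'\le\langle a^{r-1}\rangle$; thus $G'=\langle a^{r-1}\rangle=\langle a^{(m,r-1)}\rangle\cong C_{t}$ with $t=m/(m,r-1)$. For the centre I test when $g=b^{\beta}a^{\alpha}$ commutes with each generator. Collecting gives $b^{-1}(b^{\beta}a^{\alpha})b=b^{\beta}a^{\alpha r}$, so commuting with $b$ is equivalent to $\alpha(r-1)\equiv 0\pmod m$, i.e.\ $t\mid\alpha$; likewise $a^{-1}(b^{\beta}a^{\alpha})a=b^{\beta}a^{\alpha+1-r^{\beta}}$, so commuting with $a$ is equivalent to $r^{\beta}\equiv 1\pmod m$, i.e.\ $l\mid\beta$. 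Since $l\mid n$ (from $r^{n}\equiv 1$), the central elements in normal form are exactly those with $t\mid\alpha$ and $l\mid\beta$; each such equals $(a^{t})^{\alpha/t}(b^{l})^{\beta/l}$, and $a^{t},b^{l}$ are themselves central, whence $Z(G)=\langle a^{t},b^{l}\rangle$.

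For $(iii)$, the order $o'(a)$ is the least $k>0$ with $a^{k}\in G'=\langle a^{(m,r-1)}\rangle$, namely $k=(m,r-1)$. For $o'(b)$ I first note that if $b^{k}\in G'\le\langle a\rangle$, then writing $k=qn+k'$ with $0\le k'<n$ and collecting $b^{k}=b^{k'}a^{\,sq\,r^{k'}}$, the normal form forces $k'=0$; hence $o'(b)=nc$ with $c$ minimal such that $b^{nc}=a^{sc}\in\langle a^{(m,r-1)}\rangle$, i.e.\ $(m,r-1)\mid sc$, giving $c=\frac{(m,r-1)}{\bigl((m,r-1),s\bigr)}$ and so $o'(b)=n\frac{(m,r-1)}{(m,r-1,s)}$.

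The computations are routine once the collection identity is in hand; the one genuinely fiddly point—and the place I expect to spend the most care—is matching the natural expression $n\frac{(m,r-1)}{(m,r-1,s)}$ with the stated $n\frac{(m,[s,r-1])}{(m,s)}$. This rests on the gcd--lcm identity $(m,r-1)(m,s)=(m,r-1,s)\,(m,[s,r-1])$, which I would verify prime by prime: for a prime $p$ with $p$-adic valuations $M,A,S$ of $m,\,r-1,\,s$, it reads $\min(M,A)+\min(M,S)=\min(M,A,S)+\min\bigl(M,\max(A,S)\bigr)$, which holds by splitting into the cases $A\le S$ and $A\ge S$. The remaining vigilance is purely bookkeeping: ensuring each reduction respects the normal form and the divisibilities $l\mid n$ and $t\mid m$.
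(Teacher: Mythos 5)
Your proof is correct; parts $(i)$ and $(ii)$ essentially retrace the paper's argument, while part $(iii)$ takes a genuinely different route. For $(i)$ the paper reads $o(b)=nm/(m,s)$ off the product formula $|G|=|\langle b\rangle A|$ together with $|\langle b\rangle\cap A|=o(a^s)$, whereas you pass to $G/\langle a\rangle$ and minimize $k$ with $m\mid sk$ --- the same computation in different packaging; and your normal-form verification of $Z(G)=\langle a^t,b^l\rangle$ writes out in full (including the converse inclusion, i.e.\ that every central $b^\beta a^\alpha$ forces $t\mid\alpha$ and $l\mid\beta$) what the paper compresses into one sentence with a citation to Sim. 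In $(iii)$ the paper computes $o'(b)=[\langle b\rangle:\langle b\rangle\cap G']$ and identifies $\langle b\rangle\cap G'=\langle a^s\rangle\cap\langle a^{r-1}\rangle=\langle a^{[s,r-1]}\rangle$, so the stated form $n(m,[s,r-1])/(m,s)$ falls out immediately from $o(b)=nm/(m,s)$ and $|\langle a^{[s,r-1]}\rangle|=m/(m,[s,r-1])$; you instead find the least $k$ with $b^k\in G'$ head-on (correctly noting that $b^k\in\langle a\rangle$ already forces $n\mid k$), obtain $n(m,r-1)/(m,r-1,s)$, and reconcile with the stated formula via the gcd--lcm identity $(m,r-1)(m,s)=(m,r-1,s)(m,[s,r-1])$, verified by $p$-adic valuations. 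It is worth observing that this identity is exactly equivalent to the fact the paper uses silently, namely that in a cyclic group of order $m$ one has $\langle a^s\rangle\cap\langle a^{r-1}\rangle=\langle a^{[s,r-1]}\rangle$, i.e.\ $\lcm\bigl((m,s),(m,r-1)\bigr)=(m,[s,r-1])$; so the arithmetic content is identical, and the trade-off is that the paper's intersection argument produces the stated expression with no reconciliation step, while your minimality computation is more self-contained and makes the ``least $k$'' structure explicit. One small housekeeping point: the lemma permits $s=0$ (the split case of Remark~\ref{rem:one}), where your valuation argument needs the conventions $(m,0)=m$ and $[0,r-1]=0$ fixed in Section~\ref{sec:2}; under these, both expressions correctly give $o'(b)=n$, so nothing breaks, but the prime-by-prime case split should be read with $S=\infty$ there.
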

\begin{proof} Part $(i)$ follows directly from the fact that $A := \langle a \rangle$ has order $m$, $|G|= |\langle b \rangle A| = mn$ and $|\langle b \rangle \cap A| = o(a^s)$.  Observe that $G' = \langle [a, b] \rangle = \langle a^{r-1} \rangle$ from part $(ii)$. Now, from the defining relations we have that $a^t$ is the least power of $a$ that centralizes $b$ (and also $a$), while $b^l$ is the least power of $b$ that commutes with $a$, $a^{b^l} = a^{r^l} = a$  (this result can also be found in Sim \cite[Lemma 2.7]{Sim}). As for part $(iii)$ we observe that  $o'(b) = [\langle b \rangle : \langle b \rangle \cap G']$ is equal to the index $[\langle b \rangle : \langle a^q \rangle]$, where $q=[s,r-1]$, since  $\langle b \rangle \cap G' = \langle a^s \rangle \cap \langle a^{r-1} \rangle = \langle a^q \rangle$. This gives the desired expression for $o'(b)$.
\end{proof}

The following number theoretical function, borrowed from Beuerle and Kappe \cite{Kappe1},  is useful in estimating the orders of the elements of a generating set of the non-abelian tensor square.
\begin{define} \textnormal{\cite[Definition 2.1]{Kappe1}} \label{def:func_Em} 
Let $m, r$ be integers with $m>0$ and $r \in \mathcal{U}_m$. Define $E_{m} : \mathcal{U}_{m} \times \mathbb{Z} \to \mathbb{Z}$ to be the function given by
\begin{equation} 
E_{m}(r, x) :=
\left \{
\begin{array}{cc}
x, & \ if \,\, r = 1 \,\, or \,\, x = 0 \nonumber\\
1+r+\cdots + r^{x-1}, & if \,\, r \neq 1 \,\, \text{and} \,\, x>0 \nonumber\\
-r^x E_{m}(r, -x), & if \,\, r \neq 1 \,\, \text{and} \,\, x< 0 \nonumber
\end{array}
\right.
\end{equation}
\end{define}

In the next Lemma we establish some useful additional relations that hold for $G=g(a,b;m,n,r,s)$, $r>1$, which follow directly from the defining relations of $G$. The proofs will be omitted.   

\begin{lem}\label{rel.grupo.meta.finito}
Let $G = g(a,b;m,n,r,s)$ as in Definition~\ref{def:metacyclic_grp}. Then every element $g\in G$ can be written as $g=b^\beta a^\alpha$, where $0 \leq \alpha \leq m-1$ and $0 \leq \beta \leq n-1$. Furthermore, if $h = b^\delta a^\gamma \in G$, the following relations are satisfied in $G$:
\begin{enumerate} 
\item[$(i)$]   $(a^\alpha)^{b^\beta}=a^{\alpha r^\beta}$ and $(b^\beta)^{a^\alpha}=b^\beta a^{\alpha(1-r^{\beta})}$;
    \item[$(ii)$]
    $gh =b^{\beta + \delta} a^{\gamma + \alpha r^\delta}$;
    \item[$(iii)$] $h^g = b^{\delta}a^{\gamma r^{\beta} + \alpha (1-r^\delta)}$;
    \item[$(iv)$] 
    $g^\sigma = b^{\sigma\beta}a^{\alpha E_{m}(r^\beta, \, \sigma)}$ for all $\sigma \in \mathbb{Z}$. 
\end{enumerate}
	\end{lem}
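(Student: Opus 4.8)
The plan is to reduce every assertion to the single commutation rule $a^\alpha b^\beta = b^\beta a^{\alpha r^\beta}$, which is the multiplicative engine of the whole lemma. First I would establish this rule together with the normal form. Starting from the defining relation $a^b = a^r$, a one-line induction on $\beta \geq 0$ gives $a^{b^\beta} = a^{r^\beta}$, that is $b^{-\beta} a b^\beta = a^{r^\beta}$; since $r \in \mathcal{U}_m$ is a unit modulo $m$, the same identity extends to all $\beta \in \mathbb{Z}$ with $r^\beta$ read modulo $m$. Raising to the $\alpha$-th power yields $a^{\alpha b^\beta} = a^{\alpha r^\beta}$, i.e. $a^\alpha b^\beta = b^\beta a^{\alpha r^\beta}$. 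Pushing every power of $a$ to the right past every power of $b$ by this rule, and then reducing exponents using $a^m = 1$ and $b^n = a^s$, puts an arbitrary word in $a, b$ into the claimed form $b^\beta a^\alpha$ with $0 \le \alpha \le m-1$ and $0 \le \beta \le n-1$.

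Parts $(i)$--$(iii)$ are then immediate bookkeeping with this rule. The first identity in $(i)$ is exactly $a^{\alpha b^\beta} = a^{\alpha r^\beta}$, and for the second I would compute $(b^\beta)^{a^\alpha} = a^{-\alpha} b^\beta a^\alpha$, applying the rule with $\alpha$ replaced by $-\alpha$ to get $a^{-\alpha} b^\beta = b^\beta a^{-\alpha r^\beta}$ and hence $(b^\beta)^{a^\alpha} = b^\beta a^{\alpha(1 - r^\beta)}$. For $(ii)$ I would multiply $gh = b^\beta a^\alpha b^\delta a^\gamma$ and move $a^\alpha$ past $b^\delta$ to collect $b^{\beta+\delta} a^{\gamma + \alpha r^\delta}$. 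For $(iii)$ I would write $h^g = g^{-1} h g$ with $g^{-1} = a^{-\alpha} b^{-\beta}$, first form $hg = b^{\delta+\beta} a^{\gamma r^\beta + \alpha}$ via the rule, and then move the remaining $a^{-\alpha}$ past $b^\delta$ and collect exponents to reach $b^\delta a^{\gamma r^\beta + \alpha(1 - r^\delta)}$.

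The only part with genuine content is $(iv)$, and it is where I would spend the care. Taking $h = g$ in $(ii)$ gives the recursion $g^{\sigma} = g^{\sigma-1} g = b^{\sigma\beta} a^{\alpha(1 + r^\beta E_m(r^\beta,\, \sigma-1))}$ for $\sigma \ge 1$, so the statement for positive $\sigma$ reduces to the arithmetic identity $1 + r^\beta E_m(r^\beta,\, \sigma-1) = E_m(r^\beta,\, \sigma)$, which is precisely the telescoping $1 + (r^\beta + \cdots + r^{(\sigma-1)\beta}) = 1 + r^\beta + \cdots + r^{(\sigma-1)\beta}$ encoded in Definition~\ref{def:func_Em}. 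The main obstacle, really the only delicate point, is the negative case: I would first put $g^{-1} = a^{-\alpha} b^{-\beta} = b^{-\beta} a^{-\alpha r^{-\beta}}$ into normal form (legitimate because $r$ is a unit modulo $m$), verify this matches the base value $E_m(r^\beta, -1) = -r^{-\beta}$ coming from the third branch $E_m(r, x) = -r^x E_m(r, -x)$ of Definition~\ref{def:func_Em}, and then run the same induction on $|\sigma|$ to confirm the formula for $\sigma < 0$; the case $\sigma = 0$ is trivial. Throughout, exponents of $a$ are read modulo $m$, which keeps all the reductions consistent.
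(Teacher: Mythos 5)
Your proof is correct and is precisely the verification the paper intends: the authors omit the proof of Lemma~\ref{rel.grupo.meta.finito}, remarking only that the relations follow by induction from the defining relations of $G$, and your reduction of everything to the commutation rule $a^{\alpha}b^{\beta}=b^{\beta}a^{\alpha r^{\beta}}$ (exponents of $a$ read modulo $m$), followed by the induction on $|\sigma|$ in part $(iv)$, supplies exactly those omitted details. Your handling of the only delicate point, the negative case via $E_m(r^{\beta},-1)=-r^{-\beta}$ from the third branch of Definition~\ref{def:func_Em}, checks out against the recursion $E_m(r^{\beta},\sigma+1)=1+r^{\beta}E_m(r^{\beta},\sigma)$.
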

    
Below we define another natural number, $k$, which is also  useful for estimating upper bounds for the orders of the generators of $G \otimes G$.
\begin{define}{\label{def:k}} For the 4-tuple $(m,n,r,s)$ defining a finite metacyclic group $G$, let
\begin{equation*} 
    k:=\left( \frac{m}{(m,s)}, 2\frac{[s,r-1]}{(m,s)}, n\frac{[s,r-1]^2}{(m,s)^2} , E_m(r, o(b)) \right). 
\end{equation*} 
\end{define} 

\begin{rem} Notice that if $m$ is odd then $k$ is also odd, regardless of the parity of $r \in \mathcal{U}_m$. 
\end{rem}

In this paper we shall consider only the groups $G=g(a,b; m,n,r,s)$ with $m$ odd. The cases in which $m$ is even are under preparation and will be addressed in a subsequent paper.

\vskip 10pt
\noindent
\section{The group $\nu(G)$ and some of its relevant sections}

Let $G^\vfi$ be an isomorphic copy of $G$ through the isomorphism $\vfi : g \mapsto g^\vfi$ for all $g \in G$ and consider the group 
\[
\nu(G):= \langle G \cup G^{\varphi} \mid [g_1,g_2^{\varphi}]^{g_3}=[g_1^{g_3},(g_2^{g_3})^{\varphi}] =[g_1,g_2^{\varphi}]^{g_3^{\varphi}}, g_i \in G\rangle.
\]
As observed in the introduction, the normal subgroup $\Upsilon(G) := [G, G^\vfi] \leq \nu(G)$ is canonically isomorphic with the non-abelian tensor square $G \otimes G$  through the isomorphism induced by $[g, h^\vfi] \mapsto g \otimes h$ (see \cite[Proposition 2.6]{Rocco1}). In view of this isomorphism we will write $[g, h^\vfi]$ instead of $g \otimes h$ and use this commutator approach for the purposes of this article. 

The group $\nu(G)$ admits the following decomposition,  where the dots mean (internal) semidirect products.  
\begin{equation} \label{eq:decomp_nu}
\nu(G) = (\Upsilon(G) \cdot G) \cdot G^{\varphi}. 
\end{equation}
Recall that the \textit{diagonal subgroup} $\Delta(G) := \langle [g, g^{\varphi}] \mid g \in G\rangle$ is central in  $\nu(G)$ (see \cite[Remark 3]{Rocco2}) and the section $[G, G^{\varphi}]/ \Delta(G)$  is isomorphic with the non-abelian exterior square  $G \wedge G$, studied by Miller in \cite{Miller} (see \cite[p. 1984]{Rocco2}).

There is an epimorphism $\rho : \nu(G) \to G, \; g \mapsto g, \; h^\vfi \mapsto h$ for all $g \in G, h^\vfi \in G^\vfi.$ We write $\Theta(G)$ for $\ker \rho$ and we have 
\[ \Theta(G) = \langle [g, \vfi] := g^{-1}g^\vfi \, \mid \, g \in G \rangle \quad \text{and} \quad \nu(G) = \Theta(G) \cdot G,
\]
In addition, it follows from the defining relations that $\Theta(G)$ centralizes $\Upsilon(G)$. Restriction of $\rho$ to $\Upsilon(G)$ gives the \textit{derived map} $\rho' : [g, h^\vfi] \mapsto [g, h],$ whose kernel $\ker \rho' = \Upsilon(G) \cap \Theta(G)$ is a central subgroup of $\nu(G)$, denoted by $\mu(G)$ (see \cite[Proposition 2.7]{Rocco2}). Thus we have:
 \begin{equation} \label{eq:mu}
   \dfrac{\Upsilon(G)}{\mu(G)} \cong G' \quad \text{and} \quad  \dfrac{\mu(G)}{\Delta(G)} \cong M(G), 
 \end{equation}
 where $M(G) \cong H_2(G, \Z)$ is the Schur multiplier of $G$. 
 It is worth noting that our map $\rho'$ is the derived map $\kappa$ in \cite{BJR}, where the  kernel $\ker \kappa$  is written $J_2(G).$
 
 Since $\mu(G) \leq Z(\Upsilon(G))$ it follows that $\Upsilon(G)$ will be abelian whenever $G'$ is cyclic. This is true, in particular, for all metacyclic groups and so we have
 \begin{cor} \label{cor:g'cyclic}
     The non-abelian tensor square of the group $G=g(a,b;m,n,r,s)$ is abelian, for any choice of the 4-tuple $(m,n,r,s).$
 \end{cor}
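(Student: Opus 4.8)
The plan is to combine the two structural facts displayed in \eqref{eq:mu} with the cyclicity of the derived subgroup of a metacyclic group. First I would recall from Lemma~\ref{lem:orders1}$(ii)$ that $G' = \langle a^{r-1}\rangle \cong C_t$ is cyclic, where $t = m/(m,r-1)$. The key point is that this holds for \emph{every} admissible $4$-tuple $(m,n,r,s)$, with no restriction on the parity of $m$ or of $r$, which is exactly the generality asserted in the statement.

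Next I would invoke the first isomorphism in \eqref{eq:mu}, namely $\Upsilon(G)/\mu(G) \cong G'$, to conclude that $\Upsilon(G)/\mu(G)$ is cyclic. Combining this with the fact established in the paragraph preceding the corollary, that $\mu(G)$ is central in $\nu(G)$ and hence central in $\Upsilon(G)$, places $\Upsilon(G)$ in the situation of a central extension of a cyclic group by the abelian (indeed central) subgroup $\mu(G)$.

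The final step is the elementary group-theoretic observation that any group $K$ admitting a central subgroup $N \leq Z(K)$ with $K/N$ cyclic is abelian: choosing $w \in K$ whose image generates $K/N$, every element of $K$ has the form $w^{i} z$ with $z \in N$, and two such elements commute because the powers of $w$ commute among themselves while the factors from $N$ are central. Applying this with $K = \Upsilon(G)$ and $N = \mu(G)$ yields that $\Upsilon(G) \cong G \otimes G$ is abelian.

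I do not expect a genuine obstacle here: both ingredients ($\mu(G)$ central in $\nu(G)$, and $\Upsilon(G)/\mu(G) \cong G'$ cyclic) are already in hand from the preceding development, so the corollary is immediate once they are assembled. The only point deserving a moment's care is making explicit that the cyclicity of $G'$ coming from Lemma~\ref{lem:orders1}$(ii)$ is unconditional in the parameters, since it is precisely this that justifies the phrase ``for any choice of the $4$-tuple $(m,n,r,s)$.''
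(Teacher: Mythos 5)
Your proof is correct and follows essentially the same route as the paper: the paper likewise observes that $\mu(G) \leq Z(\Upsilon(G))$ together with $\Upsilon(G)/\mu(G) \cong G'$ cyclic (true for every metacyclic group) forces $\Upsilon(G)$ to be abelian, via the standard central-by-cyclic argument you spell out. Your only addition is making the elementary lemma and the parameter-independence of the cyclicity of $G'$ explicit, which the paper leaves implicit.
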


The following basic relations holding in $\nu(G)$ are consequences of the defining relations and commutator calculus (see, for instance,  \cite{BlyMor}, \cite{BJR},\cite{Rocco2}, \cite{Rocco1}). 
\begin{lem}
\label{lem:ppNu} For any group $G$ the following relations hold in $\nu(G)$:
\begin{itemize}
    \item[$(i)$] $[[g, h^\varphi ], [x, y^\varphi]] = [[g, h] , [x, y]^\varphi ]$, for all $g, h, x, y\in G$;
    \item[$(ii)$] $[g_1 , [g_2 , g_3 ]^\varphi] = [g_1 , g_2 , g_3^\varphi ]^{-1}$, for all $g_1, g_2, g_3\in G$;
    \item[$(iii)$]  $[g_1,g_2^\varphi]^{[g_3,g_4^\varphi]}=[g_1,g_2^\varphi]^{[g_3,g_4]}$ , for all $g_1, g_2, g_3, g_4\in G$;
    \item[$(iv)$] $[g_1,g_2^\varphi,g_3]=[g_1,g_2,g_3^\varphi]=[g_1,g_2^\varphi,g_3^\varphi]=[g_1^\varphi,g_2,g_3]=[g_1^\varphi,g_2,g_3^\varphi]=[g_1^\varphi,g_2^\varphi,g_3]$, for all $g_1,g_2,g_3\in G$;
    \item[$(v)$] $[g,g^\varphi]$ is central in $\nu(G)$, for all $g\in G$;
    \item[$(vi)$] $[g_1,g_2^ \varphi][g_2,g_1^\varphi]$ is central in $\nu(G)$, for all $g_1, g_2\in G$;
    \item[$(vii)$] $[g,g^\varphi] =1$, for all $g\in G'$.
\end{itemize}
\end{lem}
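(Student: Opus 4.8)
The plan is to extract from the defining relations of $\nu(G)$ two master principles and then deduce all seven identities from them by commutator calculus. \emph{First master principle (conjugation principle).} Comparing the second and third expressions in the defining relation gives $[g_1,g_2^{\varphi}]^{g_3}=[g_1,g_2^{\varphi}]^{g_3^{\varphi}}$ for all $g_i$; since the $[g_1,g_2^{\varphi}]$ generate $\Upsilon(G)$, this yields $w^{g}=w^{g^{\varphi}}$ for every $w\in\Upsilon(G)$ and $g\in G$. Writing $[g,\varphi]=g^{-1}g^{\varphi}$ and applying this twice, $w^{[g,\varphi]}=(w^{g^{-1}})^{g^{\varphi}}=(w^{g^{-1}})^{g}=w$, so $\Theta(G)$ centralises $\Upsilon(G)$. \emph{Second master principle (symmetry).} The assignment $g\mapsto g^{\varphi},\ g^{\varphi}\mapsto g$ respects the defining relations (the relation for $[g_1^{\varphi},g_2]=[g_2,g_1^{\varphi}]^{-1}$ is just the inverse of the one for $[g_2,g_1^{\varphi}]$), so it extends to an involutory automorphism $\Phi$ of $\nu(G)$. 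Throughout I will also use $[ab,c]=[a,c]^{b}[b,c]$, $[a,bc]=[a,c][a,b]^{c}$ and the Hall--Witt identity.

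I would establish (iii), (ii) and (iv) first, as these form the structural core. For (iii): since $\rho([g_3,g_4^{\varphi}])=[g_3,g_4]=\rho([g_3,g_4])$, the element $[g_3,g_4^{\varphi}][g_3,g_4]^{-1}$ lies in $\Theta(G)=\ker\rho$ and hence centralises $\Upsilon(G)$; therefore conjugation of $[g_1,g_2^{\varphi}]\in\Upsilon(G)$ by $[g_3,g_4^{\varphi}]$ and by $[g_3,g_4]$ coincide, which is (iii). For (iv), the equality $[g_1,g_2^{\varphi},g_3]=[g_1,g_2^{\varphi},g_3^{\varphi}]$ is immediate from the conjugation principle, and applying $\Phi$ converts it into the mirrored equalities with the first entry starred; the remaining identifications, which move the $\varphi$ from the second to the third slot (e.g.\ $[g_1,g_2^{\varphi},g_3]=[g_1,g_2,g_3^{\varphi}]$), are obtained by expanding each triple commutator through $[g_1,g_2^{\varphi}]^{g_3}=[g_1^{g_3},(g_2^{g_3})^{\varphi}]$ and through the induced action of $G^{\varphi}$ on $G$, the number of cases being halved by $\Phi$. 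Relation (ii) is handled in the same vein, writing $[g_2,g_3]^{\varphi}=[g_2^{\varphi},g_3^{\varphi}]$ and expanding $[g_1,[g_2^{\varphi},g_3^{\varphi}]]$.

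With (ii)--(iv) in hand the remaining parts become formal. For (i): by (iii) one may replace the conjugating element, $[g,h^{\varphi}]^{[x,y^{\varphi}]}=[g,h^{\varphi}]^{[x,y]}$, so $[[g,h^{\varphi}],[x,y^{\varphi}]]=[[g,h^{\varphi}],[x,y]]$, and two applications of (ii)/(iv) transfer the $\varphi$ to produce $[[g,h],[x,y]^{\varphi}]$. Part (v) then drops out of (iv): taking $g_1=g_2=g$ in $[g_1,g_2^{\varphi},g_3]=[g_1,g_2,g_3^{\varphi}]$ and using $[g,g]=1$ gives $[g,g^{\varphi},g_3]=[g,g,g_3^{\varphi}]=1$, while $[g,g^{\varphi}]^{g_3^{\varphi}}=[g,g^{\varphi}]^{g_3}$ by the conjugation principle, so $[g,g^{\varphi}]$ is central. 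For (vii), set $g=x,\ h=y$ in (i): the left side is $[[g,h^{\varphi}],[g,h^{\varphi}]]=1$, whence $[[g,h],[g,h]^{\varphi}]=1$, i.e.\ $[w,w^{\varphi}]=1$ for any commutator $w$; for a general $w\in G'$ one expands $[w_1w_2,(w_1w_2)^{\varphi}]$ and checks, again via (i), that the cross term $[w_1,w_2^{\varphi}][w_2,w_1^{\varphi}]$ vanishes, concluding by induction. Finally (vi) follows by polarisation: apply (v) to $g_1g_2$ and expand $[g_1g_2,(g_1g_2)^{\varphi}]$ with the identities above; the diagonal factors $[g_1,g_1^{\varphi}]$ and $[g_2,g_2^{\varphi}]$ are central by (v), which forces the cross term $[g_1,g_2^{\varphi}][g_2,g_1^{\varphi}]$ to be central as well.

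The main obstacle is the triple-commutator layer, namely relation (ii) and the slot-transfer equalities of (iv) such as $[g_1,g_2^{\varphi},g_3]=[g_1,g_2,g_3^{\varphi}]$. These are the only identities not formally implied by the conjugation principle together with $\Phi$; establishing them requires genuine commutator expansion against the defining relations (conveniently organised by Hall--Witt), in particular pinning down how $G^{\varphi}$ conjugates elements of $G$. Once this layer is secured, parts (i), (v), (vi) and (vii) follow cleanly, as indicated above.
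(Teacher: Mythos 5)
Your two master principles are sound, and the formal superstructure of your argument checks out: the conjugation principle $w^{g}=w^{g^{\varphi}}$ for $w\in\Upsilon(G)$ does give (iii) (you do not even need the generation claim $\Theta(G)=\langle g^{-1}g^{\varphi}\mid g\in G\rangle$ here --- conjugating $w$ successively by $g_3^{-1},(g_4^{\varphi})^{-1},g_3,g_4^{\varphi}$ and trading each $\varphi$-conjugation for its plain twin yields $w^{[g_3,g_4^{\varphi}]}=w^{[g_3,g_4]}$ directly); the swap automorphism exists; and, \emph{granting} (ii) and all of (iv), your derivations of (i), (v), (vi) and (vii) are correct, including the polarisation for (vi) and the cross-term induction for (vii) (for single commutators the cross term is $[A,B][B,A]=1$ with $A=[g,h^{\varphi}]$, $B=[x,y^{\varphi}]$, via (i), exactly as you say). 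For the record, the paper offers no proof of this lemma at all, only the citations \cite{Rocco1,Rocco2,BJR,BlyMor}, where these identities are established by precisely the expansions you describe. The genuine gap is that you never perform them: item (ii) and the slot-transfer equalities of (iv), such as $[g_1,g_2^{\varphi},g_3]=[g_1,g_2,g_3^{\varphi}]$, are only announced as provable ``by genuine commutator expansion''. That layer is the entire non-formal content of the lemma --- it is where the defining relation $[g_1,g_2^{\varphi}]^{g_3}=[g_1^{g_3},(g_2^{g_3})^{\varphi}]$ gets combined with $g_i^{g_3}=g_i[g_i,g_3]$ and the identities $[ab,c]=[a,c]^{b}[b,c]$, $[a,bc]=[a,c][a,b]^{c}$ --- and everything else in your write-up is bootstrapped from it. As submitted, you have a correct reduction, not a proof.

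Moreover, the step you deferred would \emph{fail} as stated: item (ii) as printed cannot be proved by any expansion, because applying the epimorphism $\rho$ sends it to the identity $[g_1,[g_2,g_3]]=[g_1,g_2,g_3]^{-1}$ in $G$, which is false already for free generators $g_1,g_2,g_3$ of a free group: modulo $\gamma_4$ the Hall--Witt/Jacobi identity gives $[g_1,[g_2,g_3]]\equiv [g_1,g_2,g_3]\,[g_3,g_1,g_2]$, and these basic commutators are independent. The identity the expansion actually produces --- and the form found in the cited sources --- is $[g_1,[g_2,g_3]^{\varphi}]=[g_2,g_3,g_1^{\varphi}]^{-1}$, which is $\rho$-consistent (both sides map to $[g_1,[g_2,g_3]]$) and is exactly the case $h=[g_2,g_3]$ of Lemma~\ref{lem:o'_pp}(iii). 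The $\rho$-test you deployed so effectively for (iii) is the tool that would have caught this index slip; a complete proof must both correct the statement and carry out the triple-commutator computation (your Hall--Witt organisation is a reasonable way to structure it). One minor simplification: in (i), after (iii) a single application of (iv) with $g_3=[x,y]$ already gives $[[g,h^{\varphi}],[x,y]]=[[g,h],[x,y]^{\varphi}]$.
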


\begin{lem}\textnormal{[\cite{Rocco2}, Lemma $3.1$]} \label{lem:o'_pp} Let $G$ be a group and $g$, $h$ be arbitrary elements of $G$. Then:
\begin{itemize}
    \item[$(i)$] $[g,h^\varphi][h,g^\varphi]=[gh,(gh)^\varphi].[h,h^\varphi]^{- 1}.[g,g^\varphi]^{-1}\ \left( \in \Delta(G) \right)$;
\item[$(ii)$] $[g,h^\varphi][h,g^\varphi]=[h,g^\varphi][g,h^\varphi]$;
\item[$(iii)$] If $h\in G'$ (or if $g \in G'$) then $[g,h^\varphi][h,g^\varphi]=1$ and $[h,h^{\varphi}]=1$;
\item[$(iv)$] If $gG'=hG'$ then $[ g,g^\varphi]=[h,h^\varphi]$;
\item[$(v)$]  If $o'(g)$ or $o'(h)$ is finite then $o([ g,h^\varphi][h,g^\varphi])$ divides $(o'(g),o'(h))$;
\item[$(vi)$]  If $o'(h) $ is finite then $o([h,h^\varphi])$ divides $(o'(h)^2,2o'(h))$;
\item[$(vii)$] In particular, if $o'(h)$ is odd, then $o([h,h^\varphi])$ divides $o'(h)$. 
\end{itemize}
\end{lem}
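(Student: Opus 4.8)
The plan is to regard $B(g,h):=[g,h^\varphi][h,g^\varphi]$ as a ``bilinear form'' and $Q(g):=[g,g^\varphi]$ as its associated ``quadratic form'', and to deduce all seven items from the polarization identity $(i)$ together with the basic relations of Lemma~\ref{lem:ppNu}. Throughout I use that $Q(g)$ is central (Lemma~\ref{lem:ppNu}$(v)$) and that $B(g,h)$ is central (Lemma~\ref{lem:ppNu}$(vi)$), so that any conjugation applied to these elements is trivial.

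First I would prove $(i)$ by expanding $[gh,(gh)^\varphi]$. Using the standard commutator identity $[xy,z]=[x,z]^y[y,z]$ in the left entry, then $[x,yz]=[x,z][x,y]^z$ in the right entry (recall $(gh)^\varphi=g^\varphi h^\varphi$), and rewriting conjugates by $\varphi$-elements as conjugates by the corresponding elements of $G$ through the defining relation $[u,v^\varphi]^{w^\varphi}=[u,v^\varphi]^{w}$, one obtains $[gh,(gh)^\varphi]=Q(g)\,Q(h)\,([g,h^\varphi][h,g^\varphi])^{h}$. Since $Q(g)$, $Q(h)$ and $B(g,h)=[g,h^\varphi][h,g^\varphi]$ are central, every conjugation collapses and the factors may be reordered, yielding exactly $[g,h^\varphi][h,g^\varphi]=[gh,(gh)^\varphi]\,[h,h^\varphi]^{-1}[g,g^\varphi]^{-1}$, which visibly lies in $\Delta(G)$. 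This is the only genuinely computational step, and keeping track of the conjugations \emph{before} invoking centrality is the main obstacle; everything afterwards is formal. Item $(ii)$ is then immediate: setting $u=[g,h^\varphi]$ and $v=[h,g^\varphi]$, the product $uv=B(g,h)$ is central by $(i)$, whence $uv=u^{-1}(uv)u=vu$.

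For $(iii)$ I would first note that $B$ is symmetric, $B(g,h)=B(h,g)$, by $(ii)$, and that it is bi-multiplicative: expanding $B(g_1g_2,h)$ as in the computation above and pulling out the central factor $B(g_2,h)$ leaves $B(g_2,h)\,B(g_1,h)^{g_2}=B(g_1,h)B(g_2,h)$, again because $B(g_1,h)$ is central. Thus $B(g,\,\cdot\,)$ is a homomorphism from $G$ into the central, hence abelian, subgroup $\Delta(G)$, so it factors through $G^{\mathrm{ab}}=G/G'$ and in particular $B(g,h)=1$ whenever $h\in G'$; the companion assertion $[h,h^\varphi]=1$ for $h\in G'$ is exactly Lemma~\ref{lem:ppNu}$(vii)$. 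Item $(iv)$ follows from the polarization form of $(i)$, namely $Q(gh)=B(g,h)\,Q(g)\,Q(h)$: if $gG'=hG'$ write $h=gc$ with $c\in G'$, so that $Q(h)=B(g,c)\,Q(g)\,Q(c)=Q(g)$, using $(iii)$ and $Q(c)=1$.

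Finally, the order estimates. For $(v)$, bi-multiplicativity together with the vanishing on $G'$ gives $B(g,h)^{o'(g)}=B(g^{o'(g)},h)=1$ (since $g^{o'(g)}\in G'$) and likewise $B(g,h)^{o'(h)}=1$, so $o(B(g,h))$ divides $(o'(g),o'(h))$. For $(vi)$, an induction on $k$ via $Q(h^{k+1})=B(h^{k},h)\,Q(h^{k})\,Q(h)$ and $B(h,h)=Q(h)^2$ yields $Q(h^{k})=Q(h)^{k^{2}}$; taking $k=o'(h)=:d$ gives $Q(h)^{d^{2}}=Q(h^{d})=1$ because $h^{d}\in G'$, while $Q(h)^{2d}=B(h,h)^{d}=B(h,h^{d})=1$, so $o(Q(h))$ divides $(d^{2},2d)$. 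Item $(vii)$ is then the arithmetic remark that, when $d=o'(h)$ is odd, $(d^{2},2d)=d\,(d,2)=d$.
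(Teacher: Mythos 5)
Your proof is correct, and it follows essentially the same route as the source: the paper itself states this lemma without proof, citing \cite{Rocco2}, Lemma 3.1, whose argument is exactly what you reconstruct --- expand $[gh,(gh)^\varphi]$ by the standard commutator identities and the defining relations of $\nu(G)$, use the centrality of $[g,g^\varphi]$ and of $[g,h^\varphi][h,g^\varphi]$ from Lemma~\ref{lem:ppNu} to collapse conjugates, and deduce $(ii)$--$(vii)$ from the resulting multiplicativity together with $[c,c^\varphi]=1$ for $c\in G'$. Your ``polarization'' packaging ($B$ biadditive and symmetric, $Q$ its quadratic form, $Q(h^{k})=Q(h)^{k^{2}}$, $B(h,h)=Q(h)^{2}$) is simply a tidy organization of those same computations, and each step you flag --- keeping conjugations explicit before invoking centrality, and converting $\varphi$-conjugates via $[u,v^\varphi]^{w^\varphi}=[u,v^\varphi]^{w}$ --- checks out.
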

The next result, found in \cite{Rocco2}, gives a simplified presentation for $\nu(H)$ when $H$ is a finite solvable group given by a polycyclic presentation $H = \langle \mathcal{S} \mid \mathcal{R} \rangle$ (see also \cite{BlyMor} for an extension of this result to polycyclic groups in general). 
\begin{theorem}\textnormal{[\cite{Rocco2}, Theorem $2.1$, Corollary $2.2$]} \label{Rocco.nu} Let $H$ and $H^\varphi$ be finite solvable groups with respective polycyclic generating sequences, $\mathcal{S} = \{a_1,a_2, \ldots, a_n\}$ and $\mathcal{S}^\vfi = \{b_1, b_2,\ldots, b_n \}$, with power-conjugate relations $\mathcal{R}$ and $\mathcal{R}^\vfi$, where $\varphi: H \to H^\varphi$ is an isomorphism such that $a_i \mapsto b_i (=a_i^{\vfi})$, $1\leq i\leq n$. Then
\begin{align*}
  (i) \;\; \nu(H) & \cong \langle a_i, b_i, 1 \leq i \leq n \, \mid \,  \mathcal{R}, \mathcal{R}^\vfi, [a_i,b_j]^{a_k}=[a_i^{a_k},b_j^{b_k}]=[a_i,b_j]^{b_k}, 1 \leq i,j,k\leq n \rangle; \\  
 (ii) \; \Upsilon(H) & =  \langle [a_i, b_j] \mid 1 \leq i, j \leq n \rangle.
\end{align*}
\end{theorem}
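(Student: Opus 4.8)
The plan is to prove the isomorphism in $(i)$ by exhibiting mutually inverse epimorphisms between $\nu(H)$ and the group $\tilde{N}$ defined by the restricted presentation on the right-hand side. In one direction there is nothing hard: every defining relation of $\tilde{N}$ already holds in $\nu(H)$ --- the relations $\mathcal{R}, \mathcal{R}^\varphi$ hold because $H$ and $H^\varphi$ embed in $\nu(H)$, and the generator-level compatibility relations are literal special cases of the defining relations of $\nu(H)$ --- so von Dyck's theorem yields an epimorphism $\tilde{N} \twoheadrightarrow \nu(H)$ fixing the generators $a_i, b_i$. The substance is entirely in the reverse direction: I must show that \emph{all} the defining relations of $\nu(H)$, indexed by arbitrary $g_1, g_2, g_3 \in H$, are satisfied in $\tilde{N}$ once the elements of $H$ are read as words in $\mathcal{S}$. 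Granting this, von Dyck gives a map $\nu(H) \twoheadrightarrow \tilde{N}$ inverse to the first, proving $(i)$.

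The reduction from generators to arbitrary elements is the heart of the argument, and I would carry it out by induction on word length in $\mathcal{S} \cup \mathcal{S}^{-1}$. Two facts are available for free in $\tilde{N}$ from pure commutator calculus, independent of any compatibility relation: the expansions $[xy,z]=[x,z]^y[y,z]$ and $[x,yz]=[x,z][x,y]^z$, which with $z = h^\varphi$ make $[\,\cdot\,,\,\cdot^\varphi\,]$ bi-multiplicative up to conjugation. I would then prove the two compatibility families separately. First I would establish the \emph{agreement-of-conjugation} relation $[u,v^\varphi]^{w}=[u,v^\varphi]^{w^\varphi}$ for all $u,v,w \in H$: the generating case $u=a_i$, $v^\varphi=b_j$, $w=a_k$ is a defining relation of $\tilde{N}$; one extends the $u,v$ variables using the bilinear expansions and extends the $w$ variable by writing $w$ as a product of generators and iterating, using that conjugation by a product is the composite of conjugations. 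Second, with this in hand I would establish the \emph{diagonal} relation $[u,v^\varphi]^{w}=[u^w,(v^w)^\varphi]$ by the same pattern of induction, invoking the agreement relation precisely at the points where a conjugation must be transported between $H$ and $H^\varphi$.

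The hard part will be controlling the \emph{simultaneous} induction across the three variables. When I expand $g_1 = uu'$ inside $[g_1,g_2^\varphi]^{g_3}$ and push the outer conjugation by $g_3$ through the product $[u,g_2^\varphi]^{u'}[u',g_2^\varphi]$, the auxiliary conjugator $u'$ begins to interact with $g_3$, so the manipulation only closes up if I repeatedly feed in the already-proven instances of both relations together with the centrality facts of Lemma~\ref{lem:ppNu} (centrality of $[g,g^\varphi]$ and of $[g_1,g_2^\varphi][g_2,g_1^\varphi]$). Choosing a well-founded measure on triples of word-lengths under which every such expansion strictly decreases is the real bookkeeping obstacle; once it is fixed the individual steps are routine. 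Finally, part $(ii)$ follows from the same machinery: $\Upsilon(H)=[H,H^\varphi]$ is generated as a group by the commutators $[g,h^\varphi]$, and expanding $g,h$ as words and applying the bilinear expansions together with the diagonal relation (which rewrites each conjugate $[a_i,b_j]^{a_k}=[a_i^{a_k},(a_j^{a_k})^\varphi]$ as a product of generating commutators) collapses every such element into a word in the $[a_i,b_j]$ alone, giving $\Upsilon(H)=\langle [a_i,b_j] \mid 1\le i,j\le n\rangle$.
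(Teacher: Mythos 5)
First, a point of comparison: the paper does not prove Theorem~\ref{Rocco.nu} at all --- it is imported verbatim from \cite{Rocco2} --- so your attempt must be measured against the argument in that source, whose entire content is exactly the step you defer. Your two-way von Dyck scheme is the right frame, and the easy direction ($\tilde{N} \twoheadrightarrow \nu(H)$, since the restricted relations are special cases) is fine. But the hard direction is not completed: you concede that ``choosing a well-founded measure \ldots is the real bookkeeping obstacle; once it is fixed the individual steps are routine,'' and that concession is the theorem. Worse, your proposed sequencing --- prove the agreement relation $[u,v^\varphi]^{w}=[u,v^\varphi]^{w^\varphi}$ in full \emph{before} the diagonal relation --- cannot be carried out: already the inductive step in the first variable produces the factor $[u,v^\varphi]^{u'w}$, and the only way to re-enter the induction is to rewrite $[u,v^\varphi]^{u'}$ as an honest commutator via the diagonal relation, $[u^{u'},(v^{u'})^\varphi]$, whose entries are in general \emph{longer} normal words than $u,v$. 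Conjugation inflates every naive length measure (on $\mathcal{S}\cup\mathcal{S}^{-1}$-words or on triples thereof), so no such measure exists; the two families must be proved by a genuinely simultaneous induction whose well-foundedness comes from the polycyclic structure, not from length. Two repairs you would need: (a) the observation that the set of $w\in H$ for which both relations hold for all pairs $(u,v)$ is closed under products and inverses (a short computation: $[u,v^\varphi]^{w_1w_2}=[u^{w_1},(v^{w_1})^\varphi]^{w_2}$, then apply goodness of $w_2$ at the pair $(u^{w_1},v^{w_1})$), which reduces the third variable to generators once and for all; and (b) for the residual induction on the pair $(u,v)$ with generator conjugators, the triangular structure of the power-conjugate presentation --- conjugation by $a_k$ preserves $H_{k+1}=\langle a_{k+1},\ldots,a_n\rangle$ and rewrites its elements as positive normal words (inverses being positive words here, since each generator carries a power relation) --- which is what supplies a well-founded ordering along the subnormal chain. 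None of this is in your sketch.

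There is also a circularity you should not wave past: you plan to ``feed in \ldots the centrality facts of Lemma~\ref{lem:ppNu}'' during the induction, but that lemma is proved in $\nu(H)$ using the defining relations for \emph{all} triples $g_1,g_2,g_3$; inside $\tilde{N}$ those facts are simply not available until the very relations you are deriving have been established, so each such auxiliary identity would itself have to be re-proved level-by-level within the induction from the generator relations alone. Part $(ii)$ inherits the same regress: rewriting $[a_i,b_j]^{a_k}=[a_i^{a_k},(a_j^{a_k})^\varphi]$ produces a commutator of arbitrary elements, whose bilinear expansion re-introduces \emph{conjugated} generator commutators $[a_p,b_q]^{g}$, so the claimed collapse into a word in the $[a_i,b_j]$ presupposes that $\langle [a_i,b_j]\rangle$ is normalized by all generators --- again an inductive statement along the polycyclic series, not a consequence of the expansions alone. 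In short: correct skeleton, but the proposal stops exactly where the proof in \cite{Rocco2} begins.
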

In particular, we can apply the above result to our group $G = g(a,b; m,n,r,s) = \langle \mathcal{S} \mid \mathcal{R} \rangle$, where here $\mathcal{S} =\{a, b\}$ and $\mathcal{R} = \{a^m, b^na^{-s}, a^ba^{-r} \},$ according to Definition~\ref{def:metacyclic_grp}, to obtain a simplified presentation of $\nu(G).$ To this end we need to control the actions by conjugation of the generators of $\nu(G)$ on the generators $[a, a^\vfi], [a, b^\vfi], [b, a^\vfi]$ and $[b, b^\vfi]$ of $\Upsilon(G) = [G, G^\vfi]$. Thus, let $\mathcal{W}$ be the following set of identities:
\begin{equation} \label{eq:conj_action}
\begin{split}
\mathcal{W} & = \{ [a, a^\vfi]^a = [a, a^\vfi]^{a^\vfi} = [a, a^\vfi], \;  
[a, a^\vfi]^b = [a, a^\vfi]^{b^\vfi} = [a^b, (a^b)^\vfi], \\
\  & \qquad [a, b^\vfi]^a = [a, b^\vfi]^{a^\vfi} = [a, (b^a)^\vfi], \; [a, b^\vfi]^b = [a, b^\vfi]^{b^\vfi} = [a^b, b^\vfi], \\ 
\ & \qquad [b,a^\vfi]^a = 
[b, a^\vfi]^{a^\vfi} = [b^a ,a^\vfi], \; 
[b,a^\vfi]^b = 
[b, a^\vfi]^{b^\vfi} = [b ,(a^b)^\vfi], \\
\ & \qquad [b,b^\vfi]^a = 
[b, b^\vfi]^{a^\vfi} = [b^a ,(b^a)^\vfi], \; 
[b,b^\vfi]^b = 
[b, b^\vfi]^{b^\vfi} = [b , b^\vfi] \}. 
\end{split}
\end{equation}

\begin{cor} \label{cor:poly:nu}
Let $G = g(a,b; m,n,r,s) = \langle \mathcal{S} \mid \mathcal{R} \rangle$ be the metacyclic group as above.
Then 
\[\nu(G) = \langle a, b, a^\vfi, b^\vfi \mid \mathcal{R}, \mathcal{R}^\vfi, \mathcal{W} \rangle.
\]
\end{cor}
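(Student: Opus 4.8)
The plan is to apply Theorem~\ref{Rocco.nu} directly to $G = g(a,b;m,n,r,s)$ with the polycyclic generating sequence $\mathcal{S} = \{a, b\}$ (so that $n = 2$ in the notation of that theorem), identifying $a_1 = a$, $a_2 = b$, $b_1 = a^\vfi$ and $b_2 = b^\vfi$. Part~$(i)$ of Theorem~\ref{Rocco.nu} then gives
\[
\nu(G) \cong \langle a, b, a^\vfi, b^\vfi \mid \mathcal{R}, \mathcal{R}^\vfi,\ [a_i, a_j^\vfi]^{a_k} = [a_i^{a_k}, (a_j^{a_k})^\vfi] = [a_i, a_j^\vfi]^{a_k^\vfi},\ 1 \le i, j, k \le 2 \rangle,
\]
so it suffices to show that the family of conjugation relations indexed by the eight triples $(i, j, k) \in \{1, 2\}^3$ coincides with the set $\mathcal{W}$ displayed in~\eqref{eq:conj_action}.

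First I would enumerate the four basic tensor generators $[a_i, a_j^\vfi]$, namely $[a, a^\vfi]$, $[a, b^\vfi]$, $[b, a^\vfi]$ and $[b, b^\vfi]$; these are precisely the four commutators heading the four pairs of lines of $\mathcal{W}$. For each such generator and each conjugating generator $a_k \in \{a, b\}$ I would write out the single chain $[a_i, a_j^\vfi]^{a_k} = [a_i^{a_k}, (a_j^{a_k})^\vfi] = [a_i, a_j^\vfi]^{a_k^\vfi}$ and simplify its middle term using the trivial self-conjugations $a^a = a$ and $b^b = b$. For instance, the triple $(1,1,2)$ yields $[a, a^\vfi]^b = [a^b, (a^b)^\vfi] = [a, a^\vfi]^{b^\vfi}$, matching the second chain of $\mathcal{W}$, while $(2,1,2)$ gives $[b, a^\vfi]^b = [b, (a^b)^\vfi] = [b, a^\vfi]^{b^\vfi}$ after cancelling $b^b = b$, matching the sixth chain. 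Running through all eight triples in this fashion reproduces exactly the eight chains of equalities listed in $\mathcal{W}$.

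The work here is bookkeeping rather than conceptual: the only point requiring care is to verify that the eight triples $(i,j,k)$ correspond bijectively to the eight chains of $\mathcal{W}$, with no relation omitted or duplicated, and that each middle term $[a_i^{a_k}, (a_j^{a_k})^\vfi]$ simplifies correctly once the self-conjugations $a^a$ and $b^b$ are removed. I would also note that the remaining nontrivial conjugates $a^b$ and $b^a$ are deliberately left unexpanded in $\mathcal{W}$; since the copy of $G$ inside $\nu(G)$ satisfies $\mathcal{R}$, one has $a^b = a^r$ and $b^a = b\,a^{1-r}$ (cf. Lemma~\ref{rel.grupo.meta.finito}$(i)$), so retaining the symbolic forms introduces no discrepancy. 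Having matched the two relation sets term by term, the isomorphism furnished by Theorem~\ref{Rocco.nu} transports to the claimed presentation $\nu(G) = \langle a, b, a^\vfi, b^\vfi \mid \mathcal{R}, \mathcal{R}^\vfi, \mathcal{W} \rangle$, which completes the proof.
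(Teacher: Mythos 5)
Your proposal is correct and follows essentially the same route as the paper, which obtains the corollary as a direct specialization of Theorem~\ref{Rocco.nu} to the polycyclic generating sequence $\{a,b\}$, with the eight index triples $(i,j,k)\in\{1,2\}^3$ yielding exactly the eight chains of $\mathcal{W}$ after the trivial self-conjugations $a^a=a$ and $b^b=b$ are simplified. Your bookkeeping verification (bijection between triples and chains, with $a^b$ and $b^a$ left symbolic) matches the paper's implicit argument precisely.
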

\begin{rem} \label{rem:delta}
    Alternatively, we could  consider the generators $[a, a^\vfi], [b, b^\vfi], [a, b^\vfi] $ and $[a, b^\vfi][b, a^\vfi]$ of $\Upsilon(G)$. We observe that $\Delta(G) = \langle [a, a^\vfi], [b, b^\vfi], [a, b^\vfi][b, a^\vfi] \rangle$ (see \cite[Proposition 3.3]{Rocco2}), so that $\Upsilon(G) = \Delta(G) \langle [a, b^\vfi] \rangle $; this also shows that $\Upsilon(G)$ is abelian, by Corollary~\ref{cor:g'cyclic}, once $\Delta(G) \leq Z(\nu(G)).$ 
\end{rem}

\section{ The non-abelian tensor square of a finite metacyclic group and other sections of $\nu(G)$ }

In this section we first establish upper bounds (as accurate as possible) for the orders of the generators of $\Upsilon(G)$. To this end it is important to further expand the identities of the set $\mathcal{W}$. The identities in the following Lemma, whose proofs are by induction arguments using the relations $\mathcal{R}$ of the group $G=g(a,b; m,n,r,s)$ and Lemmas ~\ref{rel.grupo.meta.finito} and ~\ref{lem:ppNu}, actually are  combinations of results found in  \cite{BlyMor}, \cite{BJR}, \cite{Rocco2}, \cite{Rocco1}.  

\begin{lem}\label{lema.poten.nu.meta}\label{pp.nu}
Let $G=g(a,b; m,n,r,s)$. Then the following identities hold in $\nu(G)$ for all $\alpha, \beta \in \mathbb{Z}_+$:
	\begin{itemize}
\item[$(i)$] $[a,b^\varphi]^{a^\alpha}=[a,b^\varphi][a,a^\varphi]^{\alpha(r-1)}$,
\item[$(ii)$] $[a^\alpha,b^\varphi]=[a,b^\varphi]^\alpha[a,a^\varphi]^{\binom{\alpha}{2}(r-1)}$,
\item[$(iii)$]  $[a,b^\varphi]^{b^\beta}=[a,b^\varphi]^{r^\beta}[a,a^\varphi]^{\binom{r^\beta}{2}(r-1)}$,
\item[$(iv)$] $[a,(b^\beta)^\varphi]=[a,b^\varphi]^{E_m(r,\beta)}[a,a^\varphi]^{(r-1)\sum_{i=1}^{\beta-1}\binom{r^i}{2}}$,
\item[$(v)$] $[a^\alpha,b^\varphi]^{b^\beta}=[a,b^\varphi]^{\alpha r^\beta}[a,a^\varphi]^{(r-1)\binom{\alpha r^\beta}{2}}$,
\item[$(vi)$]  $[a^\alpha,(b^\beta)^\varphi]=[a,b^\varphi]^{\alpha E_{m}(r,\beta)}[a,a^\varphi]^{(r-1)\sum_{i=0}^{\beta-1}\binom{\alpha r^i}{2}}$,
\item[$(vii)$]  $[b,a^\varphi]^{a^\alpha}=[b,a^\varphi][a,a^\varphi]^{\alpha(1-r)}$,
\item[$(viii)$]  $[b,(a^\varphi)^\alpha]=[b,a^\varphi]^\alpha[a,a^\varphi]^{\binom{\alpha}{2}(1-r)}$,
\item[$(ix)$]  $[b,a^\varphi]^{b^\beta}=[b,a^\varphi]^{r^\beta}[a,a^\varphi]^{\binom{r^\beta}{2}(1-r)}$,
\item[$(x)$] $[b^\beta,a^\varphi]=[b,a^\varphi]^{E_m(r,\beta)}[a,a^\varphi]^{(1-r)\sum_{i=1}^{\beta-1}\binom{r^i}{2}}$,
\item[$(xi)$] $[b,(a^\alpha)^\varphi]^{b^\beta}=[b,a^\varphi]^{\alpha r^\beta}[a,a^\varphi]^{(1-r)\binom{\alpha r^\beta}{2}}$,
\item[$(xii)$]  $[b^\beta,(a^\alpha)^\varphi]=[b,a^\varphi]^{\alpha E_{m}(r,\beta)}[a,a^\varphi]^{(1-r)\sum_{i=0}^{\beta-1}\binom{\alpha r^i}{2}}$.
	\end{itemize}
 \end{lem}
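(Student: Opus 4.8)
The plan is to establish all twelve identities by induction, leaning on two structural facts that reduce everything to integer bookkeeping on a single central symbol. Since $G'$ is cyclic, $\Upsilon(G)$ is abelian (Corollary~\ref{cor:g'cyclic}), and $[a,a^\varphi]$ is central in $\nu(G)$ (Lemma~\ref{lem:ppNu}(v)); hence every $[a,a^\varphi]$-factor can be collected on the right, is fixed by conjugation, and contributes only through its integer exponent. The only group-theoretic engine needed is the ordinary commutator expansion $[xy,z]=[x,z]^y[y,z]$ and $[x,yz]=[x,z][x,y]^z$ read inside $\nu(G)$, together with the rule (the defining relations, i.e. the set $\mathcal{W}$ of \eqref{eq:conj_action}) that conjugation may be pushed into a commutator, for instance $[a,b^\varphi]^{b^\beta}=[a^{b^\beta},(b^{b^\beta})^\varphi]$, and the $G$-data of Lemma~\ref{rel.grupo.meta.finito}, namely $a^{b^\beta}=a^{r^\beta}$, $[a,b]=a^{r-1}$ and $[b,a]=a^{1-r}$. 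I would first record the auxiliary linearity $[a^k,a^\varphi]=[a,(a^k)^\varphi]=[a,a^\varphi]^{k}$, which is immediate by induction on $k$ from the expansion and the centrality of $[a,a^\varphi]$.

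With this in hand, the base case $\alpha=1$ of $(i)$ falls out: writing $[a,b^\varphi]^{a}=[a,b^\varphi]\,[a,b^\varphi,a]$ and converting positions by Lemma~\ref{lem:ppNu}(iv) gives $[a,b^\varphi,a]=[a,b,a^\varphi]=[a^{r-1},a^\varphi]=[a,a^\varphi]^{r-1}$, using $[a,b]=a^{r-1}$ and the linearity just noted. Induction on $\alpha$ (conjugate the case $\alpha-1$ by $a$, using the base case and centrality) yields $(i)$. Then $(ii)$ follows from $(i)$ by peeling one $a$ off the first slot, $[a^\alpha,b^\varphi]=[a,b^\varphi]^{a^{\alpha-1}}[a^{\alpha-1},b^\varphi]$, together with the elementary identity $(\alpha-1)+\binom{\alpha-1}{2}=\binom{\alpha}{2}$. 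Identity $(iii)$ is then immediate by pushing the conjugation inside, $[a,b^\varphi]^{b^\beta}=[a^{r^\beta},b^\varphi]$, and applying $(ii)$; likewise $(v)$ is immediate, since $[a^\alpha,b^\varphi]^{b^\beta}=[a^{\alpha r^\beta},b^\varphi]$.

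For $(iv)$ I would induct on $\beta$, peeling $b^\varphi$ off the \emph{front}: $[a,(b^\beta)^\varphi]=[a,(b^{\beta-1})^\varphi]\,[a,b^\varphi]^{b^{\beta-1}}$, inserting the inductive hypothesis for the first factor and $(iii)$ for the conjugated one. The $[a,b^\varphi]$-exponent then telescopes as $E_m(r,\beta-1)+r^{\beta-1}=E_m(r,\beta)$ (Definition~\ref{def:func_Em}), and the $[a,a^\varphi]$-exponent as $\sum_{i=1}^{\beta-2}\binom{r^i}{2}+\binom{r^{\beta-1}}{2}=\sum_{i=1}^{\beta-1}\binom{r^i}{2}$. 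Identity $(vi)$ follows by the same scheme: induct on $\beta$ with base case $(ii)$, peel $b^\varphi$ off the front as above, and use $(v)$ for the conjugated factor $[a^\alpha,b^\varphi]^{b^{\beta-1}}$; this produces exactly $[a,b^\varphi]^{\alpha E_m(r,\beta)}[a,a^\varphi]^{(r-1)\sum_{i=0}^{\beta-1}\binom{\alpha r^i}{2}}$. Finally, $(vii)$--$(xii)$ are the mirror images of $(i)$--$(vi)$ under $[a,b^\varphi]\leftrightarrow[b,a^\varphi]$ and $r-1\leftrightarrow 1-r$, proven by the identical induction; the sign reversal enters only through the base case, where $[b,a^\varphi]^{a}=[b,a^\varphi]\,[b,a,a^\varphi]=[b,a^\varphi][a^{1-r},a^\varphi]=[b,a^\varphi][a,a^\varphi]^{1-r}$ because $[b,a]=a^{1-r}$.

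I expect the main obstacle to be purely the exponent bookkeeping on the central factor $[a,a^\varphi]$: the binomial and $E_m$ sums must be made to telescope \emph{exactly}, and this is delicate precisely because it depends on peeling the correct generator off the correct end at each step. The symmetric-looking alternatives, for example conjugating the whole inductive expression in $(iv)$ rather than splitting off a single front factor, generate correction terms that do \emph{not} collapse to the stated closed form. Keeping the direction of expansion consistent, and verifying the $E_m$-recursions of Definition~\ref{def:func_Em} together with the telescoping of the partial sums $\sum_i\binom{r^i}{2}$ and $\sum_i\binom{\alpha r^i}{2}$, is where essentially all the effort lies; the group theory itself reduces to the handful of moves recorded above.
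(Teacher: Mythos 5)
Your proof is correct and takes essentially the same route as the paper, which omits the details and states only that the identities follow ``by induction arguments using the relations $\mathcal{R}$ of the group $G$ and Lemmas~\ref{rel.grupo.meta.finito} and~\ref{lem:ppNu}'' --- precisely the scheme you execute (commutator expansions, the conjugation relations of $\nu(G)$, centrality of $[a,a^\varphi]$ with $[a^k,a^\varphi]=[a,a^\varphi]^k$, and the telescoping $E_m(r,\beta-1)+r^{\beta-1}=E_m(r,\beta)$ and $\binom{\alpha-1}{2}+(\alpha-1)=\binom{\alpha}{2}$). Your inductions, base cases, and the mirror symmetry $[a,b^\varphi]\leftrightarrow[b,a^\varphi]$, $r-1\leftrightarrow 1-r$ for items $(vii)$--$(xii)$ all check out, so you have simply supplied the computation the paper leaves to the reader.
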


The next Lemma relates the element $[b,b^\varphi]$ with other generators of $\Upsilon(G)$, independently of the $4$-uple $(m,n,r,s)$. As usual, we write $2 \parallel s$ to say that $2$ is the highest power of $2$ that divides $s$.

\begin{lem}\label{lema.meta.fin.n.s} Let $G=g(a,b;m,n,r,s)$. Then occurs in $\nu(G)$: 
$$\begin{cases} [b,b^\varphi]^n=[a,b^\varphi]^s=[b,a^\varphi]^s, & \mbox{if\ } 2\nmid s \ \mbox{or\ if\ } 4\mid s\\ [b,b^\varphi]^n=[a,b^\varphi]^s[a,a^\varphi]^{(r -1)}=[b,a^\varphi]^s[a,a^\varphi]^{(1-r)}, & \mbox{if\ } 2\parallel s.
\end{cases}
$$
\end{lem}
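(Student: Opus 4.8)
The plan is to compute $[b,b^\vfi]^n$ in two different ways, exploiting the defining relation $b^n=a^s$ of $G$, and then to reconcile the two expressions by controlling the order of the central element $[a,a^\vfi]$.

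First I would record the elementary fact that $b$ commutes with all of its powers, so that conjugation by $b$ fixes each $[b^{\beta},b^\vfi]$; an immediate induction on $\beta$ using the defining relations of $\nu(G)$ then yields $[b^{\beta},b^\vfi]=[b,b^\vfi]^{\beta}=[b,(b^{\beta})^\vfi]$ for all $\beta$. Taking $\beta=n$ and substituting $b^n=a^s$ gives the two starting identities $[b,b^\vfi]^n=[a^s,b^\vfi]$ and $[b,b^\vfi]^n=[b,(a^s)^\vfi]$. Applying Lemma~\ref{lema.poten.nu.meta}$(ii)$ to the first and Lemma~\ref{lema.poten.nu.meta}$(viii)$ to the second, I obtain
\begin{equation*}
[b,b^\vfi]^n=[a,b^\vfi]^s[a,a^\vfi]^{\binom{s}{2}(r-1)}=[b,a^\vfi]^s[a,a^\vfi]^{\binom{s}{2}(1-r)}.
\end{equation*}
Thus the whole statement reduces to deciding the value of the central power $[a,a^\vfi]^{\binom{s}{2}(r-1)}$, the second equality following by inverting the exponent.

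The key arithmetic input is a pair of divisibility facts about $o([a,a^\vfi])$. On one hand, the identity $[a^{\alpha},a^\vfi]=[a,a^\vfi]^{\alpha}$ (again from commuting powers of $a$) together with $a^m=1$ gives $[a,a^\vfi]^m=1$, so $o([a,a^\vfi])\mid m$. On the other hand, Lemma~\ref{lem:o'_pp}$(vi)$ with $o'(a)=(m,r-1)$ (Lemma~\ref{lem:orders1}) shows $o([a,a^\vfi])\mid 2(m,r-1)$, and since $(m,r-1)\mid r-1$ this yields $o([a,a^\vfi])\mid 2(r-1)$. With these in hand I would split on the $2$-adic valuation of $s$. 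When $s$ is odd, I write $\binom{s}{2}(r-1)=\tfrac{s-1}{2}\,s(r-1)$ and use $s(r-1)\equiv 0\pmod m$ together with $o([a,a^\vfi])\mid m$ to kill the term. When $4\mid s$, the coefficient $\binom{s}{2}$ is even, so $\binom{s}{2}(r-1)$ is a multiple of $2(r-1)$ and the term again vanishes by $o([a,a^\vfi])\mid 2(r-1)$. When $2\parallel s$, one checks that $\binom{s}{2}$ is odd, so $(\binom{s}{2}-1)(r-1)$ is a multiple of $2(r-1)$; this kills all but the residual factor $[a,a^\vfi]^{r-1}$, as claimed.

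I expect the only real obstacle to be this last number-theoretic case analysis: one must keep track simultaneously of the two different divisors $m$ and $2(r-1)$ of $o([a,a^\vfi])$, and notice that the parity of $\binom{s}{2}$ (equivalently the residue of $s$ modulo $4$) is precisely what decides whether the relation $s(r-1)\equiv 0 \pmod m$ suffices to annihilate the term or whether a single factor $[a,a^\vfi]^{r-1}$ survives. Everything else is a direct application of the power identities of Lemma~\ref{lema.poten.nu.meta} and of the centrality of the diagonal, and requires no further structural input; in particular the argument never uses that $m$ is odd, consistent with the assertion that the statement is independent of the $4$-tuple $(m,n,r,s)$.
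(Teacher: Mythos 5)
Your proposal is correct and follows essentially the same route as the paper's proof: both expand $[b,b^\varphi]^n=[b^n,b^\varphi]=[a^s,b^\varphi]$ and $[b,(a^s)^\varphi]$ via Lemma~\ref{lema.poten.nu.meta}$(ii)$ and $(viii)$, then split on the $2$-adic valuation of $s$, killing the central term $[a,a^\varphi]^{\binom{s}{2}(r-1)}$ using $m\mid s(r-1)$ in the odd case and $o([a,a^\varphi])\mid 2(r-1)$ (from Lemma~\ref{lem:o'_pp}$(vi)$) when $4\mid s$ or $2\parallel s$. Your explicit recording of $[b^\beta,b^\varphi]=[b,b^\varphi]^\beta$ and $o([a,a^\varphi])\mid m$ merely makes visible steps the paper leaves implicit, and your remark that $m$ odd is never used matches the paper's assertion that the lemma holds independently of the $4$-tuple.
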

\begin{proof} By {Lemma \ref{lema.poten.nu.meta}}, items $(ii)$ and $(viii)$, \begin{eqnarray} &[b^n,b^\varphi ]=[a^s,b^\varphi]=[a,b^\varphi]^s[a,a^\varphi]^{\binom{s}{2}(r-1)}&\label{Lemma1}\\ &[b,(b^n)^\varphi]=[b,(a^s)^\varphi]=[b,a^\varphi]^s[a,a^\varphi] ^{\binom{s}{2}(1-r)}.&\nonumber \end{eqnarray} If $s$ is odd, $[a,a^\varphi]^{\binom{s}{2}(r-1)}=[a,a^\varphi]^{\frac{(s-1)}{2}s( r-1)}=1$, since $m \mid s(r-1)$. If $4 \mid s$, then $\binom{s}{2}$ is even, $[a,a^\varphi]^{\binom{s}{2}(r-1)}= 1$ and, Lemma~\ref{lem:o'_pp} (vi), $o([a,a^\varphi])\mid 2(r - 1)$.

If $s=2s'$ with $s'$ odd, we have odd number
$$\binom{s}{2}=\frac{2s'(2s'- 1)}{2}=s'(2s'-1).$$ Thus, by $(\ref{Lemma1})$, $$[a,a^\varphi]^{\binom{s}{2}(r-1)}=[a,a^\varphi ]^{s'(2s'-1)(r-1)}=[a,a^\varphi]^{[s'(2s'-1)-1](r-1)}[a,a ^\varphi]^{(r-1)}=[a,a^\varphi]^{(r-1)},$$ since $s'(2s'-1)-1$ is even, by Lemma~\ref{lem:o'_pp} (vi). The other identity follows in an analogous manner.\end{proof}

Henceforth we focus on $G=g(a,b;m,n,r,s)$ with $m$ \textbf{odd}. The following Proposition  establishes upper bounds for the orders of the generators of $\Upsilon(G)$ where $k$ is settled in Definition \ref{def:k}.

\begin{prop}\label{prop.cota.quadd.mrta.fin} Let $G=g(a,b;m,n,r,s)$  be a finite metacyclic group with  $m$ odd and $s>0.$ The following upper bounds hold for the orders of the generators of $[G, G^\varphi]$:
\begin{itemize}
    \item[$i)$] $o([a,a^\varphi])$  divides $(m,r-1);$
    \item[$ii)$] $o([b,b^\varphi])$  divides $n(k,r-1);$
    \item[$iii)$] $o([a,b^\varphi])$  divides $(m, E_m(r,o(b)), sk);$
    \item[$iv)$] $o([a,b^\varphi][b,a^\varphi])$ divides $\left(o'(a), o'(b), E_{m}(r,o'(b)), sk \right).$
\end{itemize}
\end{prop}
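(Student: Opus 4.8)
The plan is to prove the four estimates in the order (i), (iii), (ii), (iv), each feeding on its predecessors. I would start with (i): since $m$ is odd, $o'(a)=(m,r-1)$ (Lemma~\ref{lem:orders1}(iii)) is odd, so Lemma~\ref{lem:o'_pp}(vii) yields $o([a,a^\varphi])\mid o'(a)=(m,r-1)$ immediately. The decisive consequence is that this forces $[a,a^\varphi]^{r-1}=1$, because $o([a,a^\varphi])\mid(m,r-1)\mid(r-1)$. Now every correction term on $[a,a^\varphi]$ occurring in Lemma~\ref{lema.poten.nu.meta} carries an explicit factor $(r-1)$ (or $1-r$), and the same is true of the exceptional term in Lemma~\ref{lema.meta.fin.n.s}; hence all of them vanish, and I may work throughout with the clean identities $[a^\alpha,(b^\beta)^\varphi]=[a,b^\varphi]^{\alpha E_m(r,\beta)}$, $[b^\beta,(a^\alpha)^\varphi]=[b,a^\varphi]^{\alpha E_m(r,\beta)}$ and, crucially, $[b,b^\varphi]^n=[a,b^\varphi]^s=[b,a^\varphi]^s$, the latter now holding in every parity case since the $2\parallel s$ correction $[a,a^\varphi]^{\pm(r-1)}$ has become trivial.

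With these identities the two easy factors of (iii) are read off directly from the relations of $G$: from $a^m=1$ one gets $1=[a^m,b^\varphi]=[a,b^\varphi]^m$, so $o([a,b^\varphi])\mid m$; and from $b^{o(b)}=1$ (with $o(b)=nm/(m,s)$, Lemma~\ref{lem:orders1}(i)) one gets $1=[a,(b^{o(b)})^\varphi]=[a,b^\varphi]^{E_m(r,o(b))}$, so $o([a,b^\varphi])\mid E_m(r,o(b))$. The genuine obstacle is the factor $sk$, which I would reduce to the single claim $o([a,b^\varphi]^s)\mid k$ (then $[a,b^\varphi]^{sk}=([a,b^\varphi]^s)^k=1$). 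Setting $v:=[a,b^\varphi]^s=[b,b^\varphi]^n$, the point is to verify that $o(v)$ divides each of the four entries of $k$ (Definition~\ref{def:k}) in turn: for $m/(m,s)$ one has $v^{m/(m,s)}=[a,b^\varphi]^{[m,s]}=1$ since $o([a,b^\varphi])\mid m$; for $E_m(r,o(b))$ one uses the factor just established; and for the two remaining entries $2[s,r-1]/(m,s)$ and $n[s,r-1]^2/(m,s)^2$ one rewrites them through $v=[b,b^\varphi]^n$, obtaining the exponents $2n[s,r-1]/(m,s)$ and $n^2[s,r-1]^2/(m,s)^2$, which are integer multiples of $2\,o'(b)$ and of $o'(b)^2$ respectively (recall $o'(b)=n(m,[s,r-1])/(m,s)$ by Lemma~\ref{lem:orders1}(iii)), and then invokes the bound $o([b,b^\varphi])\mid(o'(b)^2,2o'(b))$ of Lemma~\ref{lem:o'_pp}(vi). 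Routing these two lcm-entries of $k$ through $[b,b^\varphi]$ and Lemma~\ref{lem:o'_pp}(vi) is where I expect the real work of the proposition to sit.

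Once $o(v)\mid k$ is secured the rest is cheap. For (ii) one also has $o(v)\mid r-1$, since $v^{r-1}=[a,b^\varphi]^{s(r-1)}=1$ by the defining congruence $s(r-1)\equiv0\pmod m$ together with $o([a,b^\varphi])\mid m$; hence $o(v)\mid(k,r-1)$ and $[b,b^\varphi]^{n(k,r-1)}=v^{(k,r-1)}=1$, giving $o([b,b^\varphi])\mid n(k,r-1)$. For (iv), writing $w:=[a,b^\varphi][b,a^\varphi]$ (central, with abelian arithmetic available by Corollary~\ref{cor:g'cyclic}), Lemma~\ref{lem:o'_pp}(v) gives $o(w)\mid(o'(a),o'(b))$ at once; the factor $sk$ follows from $w^s=[a,b^\varphi]^s[b,a^\varphi]^s=v^2$, whence $w^{sk}=v^{2k}=(v^k)^2=1$; and for $E_m(r,o'(b))$ I would use that $b^{o'(b)}$ generates $\langle b\rangle\cap G'=\langle a^{[s,r-1]}\rangle$, say $b^{o'(b)}=a^{c}$ with $(c,m)=(m,[s,r-1])$, so that $w^{E_m(r,o'(b))}=[a,(a^{c})^\varphi]\,[a^{c},a^\varphi]=[a,a^\varphi]^{2c}=1$; the final equality holds because $o([a,a^\varphi])\mid(m,r-1)$, which divides $(m,[s,r-1])=(c,m)$, which divides $c$. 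Throughout, the only structural inputs are the simplified identities and the order formulas of Lemma~\ref{lem:orders1} and Lemma~\ref{lem:o'_pp}; the rest is elementary arithmetic, made possible by the vanishing $[a,a^\varphi]^{r-1}=1$ that clears all diagonal corrections.
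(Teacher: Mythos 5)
Your proof is correct and follows essentially the same route as the paper's: item $(i)$ via Lemma~\ref{lem:o'_pp}$(vii)$, the vanishing of all $[a,a^\varphi]^{\pm(r-1)}$ correction terms, the identity $[b,b^\varphi]^n=[a,b^\varphi]^s=[b,a^\varphi]^s$ from Lemma~\ref{lema.meta.fin.n.s}, and Lemma~\ref{lem:o'_pp}$(v)$--$(vi)$ to handle the entries of $k$. The only deviations are organizational and harmless: you first establish $o([a,b^\varphi]^s)\mid k$ and deduce $(ii)$ from it, whereas the paper proves $(ii)$ first and reads off $[a,b^\varphi]^{sk}=1$ from $[b,b^\varphi]^{nk}=1$; and in $(iv)$ you verify $\left([a,b^\varphi][b,a^\varphi]\right)^{E_m(r,o'(b))}=[a,a^\varphi]^{2c}=1$ by an explicit computation with $b^{o'(b)}=a^c$, where the paper instead invokes Lemma~\ref{lem:o'_pp}$(iii)$ for $b^{o'(b)}\in G'$ and lets the two correction exponents $(r-1)\sum\binom{r^i}{2}$ and $(1-r)\sum\binom{r^i}{2}$ cancel.
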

\begin{proof} $i)$ follows from Lemma~\ref{lem:o'_pp} $(vii)$, since $o'(a)=(m,r-1)$ is odd. 
\noindent As for item $ii)$, it follows from the fact that $o'(b)$ is finite and $$1=[b^{o(b)},b^{\varphi}]=[b,(b^{o(b)})^{\varphi}]=[b,b^{\varphi}]^{o(b)}.$$ Therefore, $[b,b^{\varphi}]$ has order dividing \begin{equation}\left(\frac{mn}{(m,s)}, {\frac{m^2n^2}{(m,s)^2}}, 2\frac{mn}{(m,s)}, 2\frac{n[s,r-1]}{(m,s)}, {\frac{n^2[s,r-1]^2}{(m,s)^2}}
	\right),\nonumber
	\end{equation} i.e., \begin{equation}
	n\left(\frac{m}{(m,s)}, 2\frac{[s,r-1]}{(m,s)}, n\frac{[s,r-1]^2}{(m,s)^2}\label{eq.o'}
	\right).\end{equation} 
By {Lemma \ref{lema.meta.fin.n.s}}, we should consider the numbers $n(r-1)$ and $nE_m(r,o(b))$ to bound the order of $[b,b^\varphi]$, since \begin{equation}
1=[a,b^\varphi]^{s(r-1)}=[a,b^\varphi]^{sE_m(r,o(b))}=[b,b^\varphi]^{n(r-1)}=[b,b^\varphi]^{nE_m(r,o(b))}\nonumber,
\end{equation} regardless of the parity of $s$. Thus, by (\ref{eq.o'}) together with the definition of $k$, we get the bound $n(k, r-1)$ for the $o([b,b^\varphi])$.

$iii)-iv)$ Using the relations established in {Lemma \ref{lema.meta.fin.n.s}}, we can prove itens $iii)$ and $iv)$ simultaneously.

From the identity \begin{eqnarray}
		1&=& [a^m, b^\varphi]=[a,b^\varphi]^m[a,a^\varphi]^{\binom{m}{2}(r-1)},\label{7}\end{eqnarray} we have that $[a,b^\varphi]^m=1$, since $[a,a^\varphi]^{r-1}=1$, by item $i)$.

Now, applying item $i)$ in the identity \begin{equation}
1=[a,(b^{o(b)})^\varphi]=[a,b^\varphi]^{E_m(r,o(b))}[a,a^\varphi]^{(r-1)\sum_{i=1}^{o(b)-1}\binom{r^i}{2}},\nonumber
\end{equation} we will obtain that $o([a,b^\varphi])\mid E_m(r,o(b))$. Analogously, we obtain the following relations: \begin{equation}
[a,b^\varphi]^{E_{m}(r,n)}=[b,a^\varphi]^{E_{m}(r,n)}=[a,a^\varphi]^s\label{id.s}\end{equation} and so $[a,b^\varphi]$ has order dividing $\left( m, E_m(r,o(b)) \right)$.

Applying item $i)$ again to the identities in {Lemma \ref{lema.meta.fin.n.s}}, we obtain \begin{equation}[b,b^\varphi]^n=[a,b^\varphi]^s=[b,a^\varphi]^s,\nonumber\end{equation} regardless of the choice of $s$. Since $[b,b^\varphi]^{nk}=1$, by item $ii)$, we have that \begin{equation} 1=[b,b^\varphi]^{nk}=[a,b^\varphi]^{sk}=[b,a^\varphi]^{sk},\nonumber\end{equation} and $k$ is odd, it also follows that \begin{equation}
([a,b^\varphi][b,a^\varphi])^{sk}=[a,b^\varphi]^{sk}[b,a^\varphi]^{sk}=1.\nonumber
\end{equation}

Finally, by {Lemma \ref{lema.poten.nu.meta}}, we have: \begin{eqnarray} 1&= &[a,(b^{o'(b)})^\varphi][b^{o'(b)},a^\varphi]\nonumber \\ &=&[a,b^\varphi] ^{E_{m}(r,o'(b))}[a,a^\varphi]^{(r-1)\sum_{i=0}^{o'(b)-1}\binom {r^i}{2}}\nonumber\\
 &&[b,a^\varphi]^{E_{m}(r,o'(b))}[a,a^\varphi]^{(1-r)\sum_{i=0}^{o '(b)-1}\binom{r^i}{2}}\nonumber\\ &=&\left( [a,b^\varphi][b,a^\varphi] \right)^{E_ {m}(r,o'(b))} \end{eqnarray}

 We do not need to analyze when $1=[a,(b^{o(b)})^\varphi][b^{o(b)},a^\varphi]$ because since $o'(b)\mid o(b)$, we have $E_{m}(r,o'(b))\mid E_{m}(r,o(b))$. In fact, consider $r^{o'(b)}=y$ and $o(b)=\theta o'(b)$. Then \begin{equation}
E_{m}(r^{o'(b)},\theta)=\frac{y^\theta-1}{y-1}=\frac{r^{\theta o'(b)}-1}{r^{o'(b)}-1}=\frac{r^{o(b)}-1}{r^{o'(b)}-1}= \dfrac{\dfrac{r^{o(b)}-1}{r-1}}{\dfrac{r^{o'(b)}-1}{r-1}}=\frac{E_{m}(r,o(b))}{E_{m}(r,o'(b))},\label{id.Em}\end{equation} since $E_{m}(r^{o'(b)},{\theta})\in \mathbb{Z}^\star_+$ or $n\mid o'(b)$. \end{proof}

\begin{rem}\label{rmk: diag}
From the proof of {Proposition \ref{prop.cota.quadd.mrta.fin}} above and by {Lemma \ref{lema.meta.fin.n.s}}, we obtain the following identities:
$$
\begin{cases}
[a,b^\varphi]^s=[b,a^\varphi]^s=[b,b^\varphi]^n\in \Delta(G),\\

[a,b^\varphi]^{E_m(r,n)}=[b,a^\varphi]^{E_m(r,n)}=[a,a^\varphi]^s\in \Delta(G).
\end{cases}
$$
\end{rem}

Having obtained the above upper  bounds on the orders of the generators of $\Upsilon(G)$, we are in a position to give a presentation for $\nu(G)$.

\begin{thm}\label{TEO.A}\label{thm: 1} Let $G=g(a,b; m,n,r,s)$ be a finite  metacyclic group with $m$ odd and define a group $M$ by the following presentation: \begin{eqnarray}\langle x_1, y_1, x_2, y_2, u, v, w, z \hspace{-.2cm}&\mid& \hspace{-.2cm} x_1^m=1, x_2^m=1, y_1^n=x_1^s, y_2^n=x_2^s, [x_1,y_1]=x_1^{r-1}, [x_2,y_2]=x_2^{r-1},\nonumber \\ 
&& \hspace{-.2cm} [x_1,y_2]=u, 
[x_1,x_2]=v, [y_1,y_2]=w, [y_1, x_2]=u^{-1}z, v^{o'(x_1)}=1,\nonumber \\ 
&& \hspace{-.2cm}  w^{n(k,r-1)}=1, u^{s}=w^n=(u^{-1}z)^s, v^s=(u^{-1}z)^{E_m(r,n)}=u^{E_m(r,n)},  \nonumber \\ 
&&\hspace{-.2cm}u^{(m,E_{m}(r,o(y_1)), sk)}=1, z^{(o'(x_1),o'(y_1),sk,E_m(r,o'(y_1)))}=1,\nonumber \\ 
&&\hspace{-.2cm} u^{x_1}=u^{x_2}=u, u^{y_1}=u^{y_2}=u^r, \ (v, w, z\ \mbox{central})\nonumber\rangle.
\end{eqnarray} Then, $M\cong \nu (G)$.
\end{thm}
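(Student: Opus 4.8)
The plan is to exhibit mutually inverse epimorphisms between $M$ and $\nu(G)$, using the two presentations at hand: the explicit one for $M$ in the statement and the simplified polycyclic presentation $\nu(G)=\langle a,b,a^\vfi,b^\vfi \mid \mathcal{R},\mathcal{R}^\vfi,\mathcal{W}\rangle$ of Corollary~\ref{cor:poly:nu}. The intended correspondence is
\[
x_1\mapsto a,\quad y_1\mapsto b,\quad x_2\mapsto a^\vfi,\quad y_2\mapsto b^\vfi,
\]
\[
u\mapsto [a,b^\vfi],\quad v\mapsto [a,a^\vfi],\quad w\mapsto [b,b^\vfi],\quad z\mapsto [a,b^\vfi][b,a^\vfi].
\]

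First I would define $\psi\colon M\to\nu(G)$ by the above assignment and check that every defining relation of $M$ holds in $\nu(G)$. The relations $\mathcal{R},\mathcal{R}^\vfi$ and the defining equalities $u=[a,b^\vfi]$, $v=[a,a^\vfi]$, $w=[b,b^\vfi]$, $u^{-1}z=[b,a^\vfi]$ are immediate; centrality of $v,w,z$ is Lemma~\ref{lem:ppNu}(v),(vi); the conjugation rules $u^{x_1}=u^{x_2}=u$ and $u^{y_1}=u^{y_2}=u^r$ follow from Lemma~\ref{lema.poten.nu.meta}(i),(iii) once $o([a,a^\vfi])\mid(m,r-1)$ forces the correction factors $[a,a^\vfi]^{r-1}$ to vanish (Proposition~\ref{prop.cota.quadd.mrta.fin}(i), using that $m$ is odd); and the power relations together with the diagonal mixing relations $u^s=w^n=(u^{-1}z)^s$ and $v^s=u^{E_m(r,n)}=(u^{-1}z)^{E_m(r,n)}$ are precisely Proposition~\ref{prop.cota.quadd.mrta.fin}(i)--(iv) and Remark~\ref{rmk: diag}. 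Since $a,b,a^\vfi,b^\vfi$ generate $\nu(G)$, the map $\psi$ is a well-defined epimorphism. This half is routine because the relations of $M$ were manufactured to be exactly the identities proved earlier to hold in $\nu(G)$.

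The substance is the reverse map. I would define $\chi\colon\nu(G)\to M$ on the generating set $\{a,b,a^\vfi,b^\vfi\}$ by $a\mapsto x_1$, $b\mapsto y_1$, $a^\vfi\mapsto x_2$, $b^\vfi\mapsto y_2$, and verify that the relations $\mathcal{R},\mathcal{R}^\vfi,\mathcal{W}$ of Corollary~\ref{cor:poly:nu} are satisfied in $M$. The families $\mathcal{R},\mathcal{R}^\vfi$ are immediate; the crux is the conjugation identities in $\mathcal{W}$, for instance $[a,a^\vfi]^b=[a^b,(a^b)^\vfi]$, $[a,b^\vfi]^a=[a,(b^a)^\vfi]$ and $[b,b^\vfi]^a=[b^a,(b^a)^\vfi]$. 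To check each, I would rewrite the right-hand side in normal form inside $G$ (using $a^b=a^r$ and $b^a=ba^{1-r}$ from Lemma~\ref{rel.grupo.meta.finito}(i)), push $\vfi$ through, and expand the resulting commutators in $M$ through the bilinear commutator calculus governed by $[x_1,y_2]=u$, $[x_1,x_2]=v$, $[y_1,y_2]=w$, $[y_1,x_2]=u^{-1}z$ with $v,w,z$ central and $u^{x_i}=u$, $u^{y_i}=u^r$. In every case the two sides should agree up to correction factors that are powers of $v$ with exponent a multiple of $r-1$, together with powers of $u,z,w$ whose exponents involve $s$, $E_m(r,n)$ or $E_m(r,o(y_1))$; these vanish precisely by the order relations $v^{o'(x_1)}=1$, $u^{(m,E_m(r,o(y_1)),sk)}=1$ and the mixing relations of $M$ --- the same relations, now read in the opposite direction. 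This would establish $\chi$ as a homomorphism, and it is surjective because its image contains $x_1,y_1,x_2,y_2$, hence their commutators $u,v,w$ and $z=[x_1,y_2][y_1,x_2]$.

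Finally I would note that $\psi\circ\chi$ fixes each of $a,b,a^\vfi,b^\vfi$, hence equals $\mathrm{id}_{\nu(G)}$; thus $\chi$ is injective, and being also surjective it is the desired isomorphism with $\psi=\chi^{-1}$. As an independent bookkeeping check, $N:=\langle u,v,w,z\rangle$ is an abelian normal subgroup of $M$ with $M/N\cong G\times G$, so $|M|=|N|\,(mn)^2$, matching $|\nu(G)|=|\Upsilon(G)|\,(mn)^2$ from the decomposition~\eqref{eq:decomp_nu}; since $\psi$ carries $N$ onto $\Upsilon(G)$, the assertion that $\chi$ is an isomorphism is exactly $|N|=|\Upsilon(G)|$. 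The main obstacle is the verification of $\mathcal{W}$ inside $M$: it is where the oddness of $m$ (which kills the $v^{r-1}$ corrections), the diagonal relations of Remark~\ref{rmk: diag}, and the carefully chosen exponents involving $k$ and $E_m$ must all cooperate, and where a single bookkeeping slip would introduce a spurious relation and collapse the group.
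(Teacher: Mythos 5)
Your proposal takes essentially the same route as the paper's own proof of Theorem~\ref{thm: 1}: the identical pair of mutually inverse assignments $\psi\colon M\to\nu(G)$ and $\chi\colon\nu(G)\to M$ on the generators, with well-definedness of $\psi$ checked against Proposition~\ref{prop.cota.quadd.mrta.fin}, Remark~\ref{rmk: diag} and Lemmas~\ref{lem:ppNu} and~\ref{pp.nu}, well-definedness of $\chi$ checked against the presentation of Corollary~\ref{cor:poly:nu}, and the isomorphism concluded from the two composites being the identity. Your sketch of the verification of $\mathcal{W}$ inside $M$ (corrections lying in powers of $v$, $u$, $z$, $w$ killed by the imposed order and mixing relations) is carried out at the same level of detail that the paper itself leaves as an ``easy exercise,'' so the argument is correct and matches.
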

\begin{proof} We will first prove that the group $M$ has $\nu(G)$ as a homomorphic image. 
Indeed, by Corollary \ref{cor:poly:nu}, we know that $\nu(G)$ is generated the set $\{a, b, a^{\varphi}, b^{\varphi} \}$, while $\Upsilon(G) = [G, G^{\varphi}]$, as an abelian subgroup of $\nu(G)$, is generated by $[a,a^\varphi]$, $[b,b^\varphi]$, $[a,b^\varphi][b,a^\varphi]$ and $[a,b^\vfi]$;   we also know that the first three of these commutators are central in $\nu(G)$.  Consider the map $\psi: M \to \nu(G)$ defined by 
$x_1 \mapsto a$, $y_1 \mapsto b$, $x_2 \mapsto a^\vfi$ and
   $y_2 \mapsto b^\vfi$. 
Thus, $u=[x_1, y_2]\mapsto [a,b^\vfi]$, $v=[x_1, x_2]\mapsto [a,a^\vfi]$, $w=[y_1, y_2]\mapsto [b,b^\vfi]$ and $z=[x_1, y_2][y_1, x_2]\mapsto [a,b^\vfi][b,a^\vfi]$ and, according to Proposition \ref{prop.cota.quadd.mrta.fin}, their relative orders are preserved by their respective images. In addition, the other relations in the given presentation  of $M$ are clearly preserved by $\psi$, in view of the subset $\mathcal{W}$ (\ref{eq:conj_action}) of the defining relations of $\nu(G)$, according to Corollary \ref{cor:poly:nu} and Lemma \ref{pp.nu}. Since $[a,a^\vfi]$, $[b,b^\vfi]$ and $[a,b^\vfi][b,a^\vfi]$ are central in $\nu(G)$, we conclude that all defining relations of $M$ are preserved by $\text{Im} \; \psi$ and thus, $\psi$ extends to an epimorphism.

Conversely, consider the map $\tau: \nu(G)\rightarrow M$ defined by $a \mapsto x_{1}$, $b \mapsto y_{1}$, $a^\varphi\mapsto x_{2}$ and $b^\varphi \mapsto y_{2}$. The presentation of $\nu(G)$ in Corollary \ref{cor:poly:nu} applies here and it is an easy exercise to check that those relations are satisfied by the presentation of $M$. It is readily seen that 
$\tau \psi = Id_M$ and $\psi \tau = Id_{\nu(G)},$ given that $M \stackrel{\psi}{\simeq} \nu(G).$ 
 \end{proof}

In view of the isomorphism given by Theorem~\ref{thm: 1} and its proof, we see that the subgroup $\Upsilon(G) \leq \nu(G)$ is isomorphic with the subgroup of  $M$ generated by $\{u, v, w, z\}$. As a consequence,  we obtain the following presentation for the non-abelian tensor square $G \otimes G \; (\simeq \Upsilon(G))$. 
\begin{cor} \label{cor: nats}
Let $A$ be the abelian group given by the presentation
 \begin{eqnarray} \langle u, v, w, z \hspace{-.2cm}&|& \hspace{-.2cm} u^{(m,E_{m}(r,o(b)), sk)}=1,v^{o'(a)}=1, z^{(o'(a),o'(b),sk,E_m(r,o' (b)))}=1,\nonumber \\ &&\hspace{-.2cm} w^{n(k,r-1)}=1, u^{s}=w^n=(u^{-1}z)^s, u^{E_m(r, n)}=v^s=(u^{-1}z)^{E_m(r,n)},\nonumber \\ &&\hspace{-.2cm} [u,v]=[u,w]=[u,z]=[v,w]=[v,z]=[w,z]=1\nonumber \rangle.  
 \end{eqnarray}
 Then $G \otimes G \simeq A.$
\end{cor}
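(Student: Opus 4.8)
The plan is to leverage the isomorphism $M \cong \nu(G)$ already established in Theorem~\ref{thm: 1}, so that the corollary becomes a statement about identifying the subgroup $\Upsilon(G) = [G, G^\varphi]$ inside $M$ and recognizing that this subgroup admits exactly the presentation claimed for $A$. Since Proposition~\ref{prop.cota.quadd.mrta.fin} and Remark~\ref{rem:delta} tell us that $\Upsilon(G)$ is abelian and generated by the four commutators $[a,a^\varphi], [b,b^\varphi], [a,b^\varphi][b,a^\varphi]$ and $[a,b^\varphi]$, the images of these under $\psi^{-1}$ are precisely $v, w, z$ and $u$. The first step is therefore to record that, under the isomorphism $\psi: M \to \nu(G)$ of Theorem~\ref{thm: 1}, the subgroup $\langle u, v, w, z\rangle \leq M$ maps isomorphically onto $\Upsilon(G)$; this is immediate because $\psi$ is an isomorphism and carries the generating set $\{u,v,w,z\}$ onto a generating set of $\Upsilon(G)$.

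The second step is to verify that $\langle u, v, w, z \rangle$, as an \emph{abstract abelian} group, is presented exactly by the relations listed for $A$. Here I would isolate from the full presentation of $M$ precisely those relations that involve only $u, v, w, z$: namely the order relations $u^{(m,E_m(r,o(b)),sk)}=1$, $v^{o'(a)}=1$, $w^{n(k,r-1)}=1$, $z^{(o'(a),o'(b),sk,E_m(r,o'(b)))}=1$, together with the ``mixed'' relations $u^s=w^n=(u^{-1}z)^s$ and $u^{E_m(r,n)}=v^s=(u^{-1}z)^{E_m(r,n)}$, plus the commutativity relations among $u,v,w,z$ that follow from $v,w,z$ being central in $M$ and $\Upsilon(G)$ being abelian. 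Every other defining relation of $M$ either involves $x_1, x_2, y_1, y_2$ (the ``group'' generators) or describes the conjugation action of those generators on $u$ (the relations $u^{x_1}=u^{x_2}=u$, $u^{y_1}=u^{y_2}=u^r$), and these play no role in presenting the abelian subgroup $\langle u,v,w,z\rangle$ in isolation.

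The substantive point — and the step I expect to require the most care — is justifying that $\langle u,v,w,z\rangle$ receives \emph{no further relations} beyond those listed, i.e., that the restriction of the presentation of $M$ to this subgroup is faithful. This is a standard but genuine subtlety with presentations of subgroups: a subgroup of a group given by generators and relations is generally \emph{not} presented merely by the relations that happen to involve only those generators. The clean way to settle this is to argue in the direction of the canonical biderivation. One defines a map from the abelian group $A$ to $\Upsilon(G) = G \otimes G$ sending $u \mapsto a\otimes b$, $v\mapsto a\otimes a$, $w\mapsto b\otimes b$, $z\mapsto (a\otimes b)(b\otimes a)$, checks using Lemma~\ref{pp.nu}, Lemma~\ref{lema.meta.fin.n.s} and Remark~\ref{rem:delta} that all the defining relations of $A$ are satisfied in $\Upsilon(G)$ (so the map is a well-defined homomorphism onto $\Upsilon(G)$), and then observes that this map is the inverse of the isomorphism $\langle u,v,w,z\rangle \to \Upsilon(G)$ supplied by $\psi$. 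Concretely, since $\psi$ restricts to a surjection $\langle u,v,w,z\rangle \to \Upsilon(G)$ and $A$ surjects onto $\langle u,v,w,z\rangle$ (as $A$ is defined by a subset of the relations), the composite $A \to \Upsilon(G)$ is onto; exhibiting the reverse homomorphism $\Upsilon(G)\to A$ forces all three maps to be isomorphisms. Thus $G\otimes G \simeq \Upsilon(G) \simeq \langle u,v,w,z\rangle \simeq A$, which completes the proof.
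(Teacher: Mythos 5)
Your first step --- restricting the isomorphism $\psi$ of Theorem~\ref{thm: 1} to identify $\langle u,v,w,z\rangle\le M$ with $\Upsilon(G)$ --- is exactly the paper's own justification, and you are right to single out the real difficulty: a subgroup of a presented group is not automatically presented by the subset of relations in its generators. But your resolution of that difficulty does not go through as written. The map you construct runs from $A$ \emph{onto} $\Upsilon(G)$ (it is well defined by Proposition~\ref{prop.cota.quadd.mrta.fin} together with the identities of Remark~\ref{rmk: diag} --- note that it is that remark, not Remark~\ref{rem:delta}, which supplies $[a,b^\varphi]^s=[b,b^\varphi]^n$ and $[a,b^\varphi]^{E_m(r,n)}=[a,a^\varphi]^s$), and a surjection in this direction you already had from the composite $A\twoheadrightarrow\langle u,v,w,z\rangle\to\Upsilon(G)$. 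What injectivity requires is a homomorphism in the \emph{opposite} direction, $\Upsilon(G)\to A$, hitting the generators; you assert that ``exhibiting the reverse homomorphism forces all three maps to be isomorphisms,'' but you never exhibit it. Moreover, your claim that the surjection $A\to\Upsilon(G)$ ``is the inverse of'' $\psi|_{\langle u,v,w,z\rangle}$ is circular: the inverse of that restriction has domain $\Upsilon(G)$ and codomain $\langle u,v,w,z\rangle_M$, and identifying $\langle u,v,w,z\rangle_M$ with $A$ is precisely the point at issue. To produce a homomorphism out of $\Upsilon(G)\cong G\otimes G$ into $A$ one would have to construct a biderivation $G\times G\to A$ and verify its axioms on normal forms $b^\beta a^\alpha$ --- a substantial computation, and the very conjecture-and-verify method the paper is explicitly designed to avoid.

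The paper closes this gap by a different mechanism, carried out in Theorem~\ref{thr:2}: it builds $K=(A\rtimes_f G)\rtimes_\epsilon G^\varphi$ explicitly, checking that $f$ and $\epsilon$ are well-defined actions by automorphisms satisfying the relations of $G$ and $G^\varphi$, and then verifies that $K$ admits precisely the presentation of $M$. Since $A$ sits inside $K$ as the base of the semidirect product, with image $\langle u,v,w,z\rangle$, no collapse of $A$ can occur in $M\cong K$; hence $\langle u,v,w,z\rangle_M\cong A$ and $G\otimes G\simeq\Upsilon(G)\simeq A$. So your outline is structurally sound up to the decisive step, but that step --- the reverse homomorphism, or equivalently the faithfulness of $A$ inside $M$ --- is missing; to complete the proof you must either carry out the biderivation computation or realize $M$ as the iterated semidirect product over $A$, as the paper does.
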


It is worth noting that the presentation of $\nu(G)$ obtained in Theorem~\ref{thm: 1} is motivated, according to Corollary~\ref{cor: nats}, by the decomposition (\ref{eq:decomp_nu})  of the group $\nu(G)$ as semidirect products, $\nu(G) = ((\Upsilon(G) \cdot G)\cdot G^\vfi \simeq ((G \otimes G) \rtimes G) \rtimes G$, where the actions are those as in (\ref{eq:conj_action}), taking into account the bounds established in Proposition~\ref{prop.cota.quadd.mrta.fin} and Remark~\ref{rmk: diag}. Indeed, we can first construct an  extension starting with an abelian group $A$ (as given in Corollary~\ref{cor: nats}) by $G$, and then extending this semidirect product $A \rtimes G$ by the other copy of $G$.  

\begin{thm}\label{thr:2} Let $G=g(a,b; m,n,r,s)$ be a finite  metacyclic group with $m$ odd. Then the presentation of $\nu(G)$ given by Theorem~\ref{thm: 1} can be effectively constructed.  
\end{thm}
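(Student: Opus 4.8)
The plan is to realize the group $M$ of Theorem~\ref{thm: 1} concretely as the iterated semidirect product $(A \rtimes G) \rtimes G^\vfi$ anticipated in the remark following Corollary~\ref{cor: nats}, thereby exhibiting $\nu(G)$ as a group assembled in finitely many explicit steps from data computable from the $4$-tuple $(m,n,r,s)$. First I would record that every numerical parameter appearing in the presentations of Corollary~\ref{cor: nats} and Theorem~\ref{thm: 1} --- the orders $o(b)$, $o'(a)$, $o'(b)$ of Lemma~\ref{lem:orders1}, the integer $k$ of Definition~\ref{def:k}, the values $E_m(r,\cdot)$ of Definition~\ref{def:func_Em}, and the various greatest common divisors --- is an effectively computable function of $(m,n,r,s)$. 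Hence the finite abelian group $A$ of Corollary~\ref{cor: nats} is itself effectively given, its invariant factors being obtainable, for instance, from the Smith normal form of its relation matrix.

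Next I would construct the first semidirect product $A \rtimes G$. Reading the conjugation rules off the presentation of $M$ (equivalently, off $\mathcal{W}$ in (\ref{eq:conj_action})), define endomorphisms $\theta_a, \theta_b$ of $A$ by $u^{\theta_a}=u$, $u^{\theta_b}=u^{r}$, with $v, w, z$ fixed by both. The key point is to verify that $a \mapsto \theta_a$, $b \mapsto \theta_b$ defines a homomorphism $G \to \Aut(A)$, i.e.\ that the defining relations $a^m=1$, $b^n=a^s$ and $a^{b}=a^{r}$ of $G$ are honoured: $\theta_a^{m}=\mathrm{id}$ is immediate, the conjugacy relation is built into the definition, and $\theta_b^{n}=\theta_a^{s}$ follows from $r^n \equiv 1 \pmod m$ together with the bound $o(u)\mid (m, E_m(r,o(b)), sk)$ of Proposition~\ref{prop.cota.quadd.mrta.fin}. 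The semidirect product $A \rtimes G$ is then well defined.

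Finally I would extend by the second copy $G^\vfi = \langle x_2, y_2\rangle$. Here the action of $G^\vfi$ on $A \rtimes G$ must be prescribed both on $A$ (again $u^{x_2}=u$, $u^{y_2}=u^{r}$, with $v,w,z$ fixed) and on the subgroup $G$, where conjugation carries the mixed commutators into $A$, namely $x_1^{x_2}=x_1 v$, $x_1^{y_2}=x_1 u$, $y_1^{y_2}=y_1 w$ and $y_1^{x_2}=y_1 u^{-1}z$, in accordance with (\ref{eq:conj_action}). I expect the main obstacle to lie precisely here: one must check that each of $x_2, y_2$ induces an automorphism of $A \rtimes G$ preserving all its relations, and that $x_2 \mapsto \theta_{x_2}$, $y_2 \mapsto \theta_{y_2}$ respects the relations of $G^\vfi$. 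The power relations $u^{s}=w^{n}=(u^{-1}z)^{s}$ and $u^{E_m(r,n)}=v^{s}=(u^{-1}z)^{E_m(r,n)}$, tying $w$ and $v$ to $u$ and $z$, are exactly the compatibility identities supplied by Lemma~\ref{lema.meta.fin.n.s} and Remark~\ref{rmk: diag}, and they are what guarantee this second action is consistent. Once both actions are verified, the group $(A \rtimes G) \rtimes G^\vfi$ has, on the generators $x_1, y_1, x_2, y_2, u, v, w, z$, precisely the relations listed in Theorem~\ref{thm: 1}; since the semidirect decomposition (\ref{eq:decomp_nu}) forces $\lvert (A \rtimes G) \rtimes G^\vfi\rvert = \lvert A\rvert\,(mn)^{2}$, no collapse occurs and the construction yields a concrete group isomorphic to $M \cong \nu(G)$, as required.
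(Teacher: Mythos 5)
Your proposal is correct and follows essentially the same route as the paper's proof: you build $A$ from Corollary~\ref{cor: nats}, form $H = A \rtimes_f G$ via the action $u \mapsto u$, $u \mapsto u^r$ (with $v,w,z$ fixed), then extend by $G^\vfi$ acting exactly as in the paper ($x_1^{x_2}=x_1v$, $x_1^{y_2}=x_1u$, $y_1^{x_2}=y_1u^{-1}z$, $y_1^{y_2}=y_1w$), and you correctly identify the compatibility identities $u^s=w^n=(u^{-1}z)^s$ and $u^{E_m(r,n)}=v^s=(u^{-1}z)^{E_m(r,n)}$ from Lemma~\ref{lema.meta.fin.n.s} and Remark~\ref{rmk: diag} as the crux of verifying that the second action is well defined. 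The only quibble is cosmetic: the order count $\lvert A\rvert (mn)^2$ holds by construction of the semidirect product rather than being ``forced'' by the decomposition (\ref{eq:decomp_nu}), but this does not affect the argument.
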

	\begin{proof} Let $A$ be the abelian group given by 
\begin{eqnarray} A=\langle u, v, w, z \hspace{-.2cm}&|& \hspace{-.2cm} u^{(m,E_{m}(r,o(b)), sk)}=1,v^{o'(a)}=1, z^{(o'(a),o'(b),sk,E_m(r,o' (b)))}=1,\nonumber \\ &&\hspace{-.2cm} w^{n(k,r-1)}=1, u^{s}=w^n=(u^{-1}z)^s, u^{E_m(r, n)}=v^s=(u^{-1}z)^{E_m(r,n)},\nonumber \\ &&\hspace{-.2cm} [u,v]=[u,w]=[u,z]=[v,w]=[v,z]=[w,z]=1\nonumber \rangle. \end{eqnarray} 
The actions of the generators of $G$ on those generators of $\Upsilon(G)$, according to (\ref{eq:conj_action}), suggest an application $f: G\rightarrow \Aut(A)$ defined on the generators by the following:

\begin{minipage}{.45cm}
\begin{eqnarray}
a&\longmapsto&a^f:A\longrightarrow A \nonumber\\
& &\hspace{.8cm} u\longmapsto (u)a^f:=u\nonumber\\
& &\hspace{.8cm} v\longmapsto (v)a^f:=v\nonumber\\
& &\hspace{.8cm} w\longmapsto (w)a^f:=w\nonumber\\
& &\hspace{.8cm} z\longmapsto (z)a^f:=z\nonumber
\end{eqnarray}	
\end{minipage}\hfil
\begin{minipage}{.45cm}
\begin{eqnarray}
\hspace{.5cm}b&\longmapsto&b^f:A\longrightarrow A \nonumber\\
& &\hspace{.8cm} u\longmapsto (u)b^f:=u^r\nonumber\\
& &\hspace{.8cm} v\longmapsto (v)b^f:=v\nonumber\\
& &\hspace{.8cm} w\longmapsto (w)b^f:=w\nonumber\\
& &\hspace{.8cm} z\longmapsto (z)b^f:=z\nonumber
\end{eqnarray}
\end{minipage}
\medskip
\medskip

\noindent {\bf Assertion 1}: $f$ extends to a homomorphism from $G$ to $\Aut(A)$.

{\bf Step 1}: $a^f$ and $b^f$ extend to homomorphisms of $A$.

The application $a^f$ fixes the elements $u$, $v$, $w$ and $z$ of $A$ and the application $b^f$ fixes the elements $v$, $w$ and $z$. Therefore, it is necessary to analyze the relations of $A$ to the elements $u^r$ and $u^{-r}z$:
\begin{equation}(u^r)^m=u^{mr}=1, \hspace{.7cm} (u^{-r}z)^m=u^{-mr}z^m=1.\nonumber\end{equation} Furthermore,\begin{equation}
(u^r)^s=u^s,\nonumber
\end{equation}
\begin{equation}
(u^{-r}z)^s=u^{-sr}z^s=(u^{-1}z)^s,\nonumber
\end{equation}
\begin{equation} (u^r)^{sk}=u^{ks}=1\quad\mbox{and}\nonumber
\end{equation}\begin{equation} (u^{-r}z)^{sk}=u^{ks}z^s=(u^{-1}z)^s,\nonumber
\end{equation} since $sr\equiv s\pmod m$. Finally, 
\begin{equation}
(u^r)^{E_m(r,n)}=u^{r(1+r+\ldots+r^{n-1})}=u^{r+r^2+\ldots+r^{n-1}+1}=u^{E_m(r,n)},\nonumber
\end{equation}\begin{equation}
(u^{-r}z)^{E_m(r,n)}=u^{-r(1+r+\ldots+r^{n-1})}z^{E_m(r,n)}=u^{-(r+r^2+\ldots+r^{n-1}+1)}z^{E_m(r,n)}=(u^{-1}z)^{E_m(r,n)},\nonumber
\end{equation}\begin{equation}
(u^r)^{E_m(r,o(b))}=u^{E_m(r,o(b))}=1\quad\mbox{and}\nonumber\end{equation}\begin{equation}
(u^{-r}z)^{E_m(r,o(b))}=(u^{-1}z)^{E_m(r,o(b))}=1,\nonumber\end{equation} since $n\mid o(b)$ and $r^n\equiv 1\pmod m$.


\vspace{.2cm}
{\bf Step 2}: $a^f$ and $b^f$ are automorphisms of $A$.

We prove that $a^f$ and $b^f$ are epimorphisms since $A$ is a finite group and, $\langle (u)a^f, v,w,z \rangle\leq A$ and $\langle (u)b^f, v,w,z \rangle\leq A$, where $a^f$ and $b^f$ are homomorphisms of $A$ that fix $v, w$ and $z$.

\indent Now, $\langle (u)a^f, v \rangle=\langle u, v\rangle$, by the definition of the map $a^f$ and, $\langle (u)b^f, v\rangle=\langle u^r, v \rangle=\langle u, v\rangle$, since $(m,r)=1$.

Therefore, $a^f$ and $b^f$ are automorphisms of $A$.

\vspace{.2cm}
{\bf Step 3:} $a^f$ and $b^f$ satisfy the relations of $G$.


The application $a^f$ also fixes $u$, which implies that $(u)[a^f]^\alpha=u$, for any $\alpha\in \mathbb{Z}_+$. In particular, for $\alpha=m$, the identity $(u)[a^f]^m=u$ implies that $$[a^f]^m=Id_A.$$
\indent We will verify that $a^f$ and $b^f$ also satisfy the other relations of $G$. Observe that:
\begin{eqnarray}
(u)[b^f]^2 &=& (u^r)b^f=u^{r^2},\nonumber\\
{(u)[b^f]}^3 &=& (u^{r^2})b^f=u^{r^3}.\nonumber
\end{eqnarray}
\indent Inductively, we have \begin{equation}
(u)[b^f]^{\beta}=u^{r^\beta}, \mbox{\, for any }  \beta\in \mathbb{Z}_+.\label{eq.homo.}\nonumber
\end{equation} In particular, if $\beta=n$ and $\alpha=s$, we have
$$(u)[a^f]^s=u=u^{r^n}=(u)[b^f]^n,$$ since $r^n\equiv 1\pmod m$. This means that $[a^f]^s=[b^f]^n$.

Finally, since $b^{-1}=b^{o(b)-1}$ then $(u)[b^f]^{-1}=u^{r^{o(b)-1}}$. Hence, \begin{eqnarray} (u)[(b^f)^{-1}\circ a^f\circ b^f]&=&\{[(u)b^f] a^f\} (b^f)^{-1}\nonumber\\ &=&((u^r)[a^f])[b^f]^{-1}=(u^{r})[b^f]^{-1}\nonumber\\ &=&u^ {r^{o(b)-1}r}=u^{r^{o(b)}}.\nonumber \end{eqnarray} But $n\mid o(b)$ and $r^n \equiv 1\pmod m$, i.e., $(u)[(b^f)^{-1}\circ a^f\circ b^f]=u^r=(u)[a^f]^r$.

Therefore, $[b^f]^{-1}\circ a^f\circ b^f=[a^f]^r$.

Thus, we conclude that $a^f$ and $b^f$ satisfy the relations of $G$, i.e., \begin{equation}
[a^f]^m=Id_A, \qquad [b^f]^ n=[a^f]^s, \qquad [b^f]^{-1}\circ [a^f]\circ [b^f]=[a^f]^r.\nonumber
\end{equation}


In other words, $f$ determines an action of $G$ on $A$, proving the {\bf Assertion 1}.

According to this action we have an extension $H=A\rtimes_f G$, whose presentation is given by:
\begin{eqnarray} H = \langle a, b, u, v, w, z \hspace{-.2cm}&|& \hspace{-.2cm} a^m=1, b^n=a^s, [ a,b]=a^{r-1}, u^a=u, u^b=u^r,\nonumber\\ &&\hspace{-.2cm}v^{o'(a)}=1, w^{n(k,r-1)}=1,z^{(o'(a),o'(b), sk,E_m(r,o'(b)))}=1, \nonumber \\ &&\hspace{-.2cm } u^{s}=w^n=(u^{-1}z)^s, u^{E_m(r,n)}=v^s=(u^{-1}z)^{E_m(r,n)}, \nonumber \\ && \hspace{-.2cm} [u,v]=[u,w]=[u,z]=[v,w]=[v,z]=[w,z]=1 \nonumber \\
&& \hspace{-.2cm} u^{(m,E_{m}(r,o(b)), sk)}= 1, (v, w, z\ central)
\rangle, \end{eqnarray}

The action $f$ of $G$ on $A$ also induces an application $\epsilon: G^\varphi\rightarrow \Aut(H)$ given by:

\vspace{.5cm}
\begin{minipage}{.1cm}
	\vspace{-.67cm}	
	\begin{eqnarray}\hspace{-5cm}	
	a^\varphi&\longmapsto&a^{\varphi\epsilon}:H\longrightarrow H \nonumber\\
	&& \hspace{1.05cm} a\longmapsto (a)a^{\varphi\epsilon}:=av\nonumber\\
	&& \hspace{1.05cm} b\longmapsto (b)a^{\varphi\epsilon}:=b(u^{-1}z)\nonumber\\
	& &\hspace{1.05cm} u\longmapsto (u)a^{\varphi\epsilon}:=u\nonumber\\
	& &\hspace{1.05cm} v\longmapsto (v)a^{\varphi\epsilon}:=v\nonumber\\
	& &\hspace{.95cm} w\longmapsto (w)a^{\varphi\epsilon}:=w\nonumber\\
	& &\hspace{1.05cm} z\longmapsto (z)a^{\varphi\epsilon}:=z\nonumber
	\end{eqnarray}	
\end{minipage}\hfill
\begin{minipage}{7.85cm}
	\vspace{-.63cm}
	\begin{eqnarray}
	b^\varphi&\longmapsto&b^{\varphi\epsilon}:H\longrightarrow H \nonumber\\
	&& \hspace{1.05cm} a\longmapsto (a)b^{\varphi\epsilon}:=au\nonumber\\
	&& \hspace{1.05cm} b\longmapsto (b)b^{\varphi\epsilon}:=bw\nonumber\\
	& &\hspace{1.05cm} u\longmapsto (u)b^{\varphi\epsilon}:=u^r\nonumber\\
	& &\hspace{1.05cm} v\longmapsto (v)b^{\varphi\epsilon}:=v\nonumber\\
	& &\hspace{.95cm} w\longmapsto (w)b^{\varphi\epsilon}:=w\nonumber\\
	& &\hspace{1.05cm} z\longmapsto (z)b^{\varphi\epsilon}:=z\nonumber
	\end{eqnarray}
	
\end{minipage}

\vspace{.5cm}
\noindent {\bf Assertion 2}: The application $\epsilon$ determines a homomorphism from $G^\varphi$ to $\Aut(H)$.

\vspace{.2cm}
{\bf Step 1}: $a^{\varphi\epsilon}$ and $b^{\varphi\epsilon}$ extend to homomorphisms and preserves the relations of $H$.

Since the application $a^{\varphi \epsilon}$ fixes $u$, $v$, $w$ and $z$ of $H$, we must evaluate whether the relations of $H$ are satisfied for the elements $av$, $b(u^{-1}z)$. The application $b^{\varphi \epsilon}$ fixes $v$, $w$ and $z$ of $H$ and thus, we only need to verify whether the relations of $H$ are satisfied for the elements $au$, $bw$, $u^r$ and $u^{-r}z$ knowing that $[u,v]=[u,w]=[u,z]=[v,w]=[v,z]=[w,z]=1$ in $H$:

\noindent For $a^{\varphi\epsilon}$: $$(av)^m=v^ma^m=1.$$ \indent As \begin{equation} (bu^{-1}z)^ {ns}=w^{n^2}b^{ns} \,\,\, \mbox{and}\,\,\, (av)^{s^2}=v^{s^2}a^{ s^2}=v^{s^2}b^{ns},\nonumber \end{equation} where \begin{equation} (v^s)^s=(u^{E_m(r,n)} )^s=u^{s(1+r+\ldots+r^{n-1})}=u^{sn}=w^{nn},\nonumber \end{equation} by the relations of $H$ and the fact that $sr \equiv s \pmod m$, it follows that $w^{n^2}=v^{s^2}$ and $(av)^s=(bu ^{-1}z)^n$.

In the equation \begin{equation}
(av)^{bu^{-1}z}=(a^rv)^{u^{-1}}=(a^ {u^{-1}})^r,\nonumber\end{equation} we have $(a^u)=a$ such that $a=(a^u)^{u^{-1}} =a^{u^{-1}}$ with $v^{r}=v\in H$. Therefore, $(av)^{bu^{-1}z}=a^rv=a^rv^r=(av)^r$ and then
\begin{equation}
(u)^{av}=u^a=u\quad \mbox{and}\quad (u)^{b(u^{-1}z)}=u^b=u^r\nonumber.
\end{equation}

\noindent For $b^f$: \begin{equation} (au)^m=u^ma^m=1.\nonumber\end{equation} \begin{equation} (bw)^{n}=w^{n}b^{n}=u^sa^s=(au)^s\nonumber\end{equation} \begin{equation} (au)^{bw}=(a^ru^b)=a^ru^r =(au)^r.\nonumber\end{equation} \indent By the relations involving $u^r$ and $u^{-r}z$ in $H$: \begin{equation}(u^r)^m=u^{rm}=1.\nonumber\end{equation} \begin{equation}((u^r)^{-1}z)^m=u^{-rm}z^m=1\nonumber\end{equation}\indent and $sr\equiv s\pmod m$, we have\begin{equation}(u^r)^s=u^{sr}=u^s,\nonumber\end{equation}\begin{equation}((u^r)^ {-1}z)^s=u^{-sr}z^{sr}=(u^{-1}z)^{s}=w^n,\nonumber\end{equation}\begin{equation} (u^r)^{sk}=u^{srk}=u^{sk}=1\quad\mbox{and}\nonumber \end{equation}\begin{equation}(u^{-r}z)^{sk}=u^{-srk}z^{sk}=(u^{-1}z)^{sk}=1.\nonumber\end{equation} Furthermore, \begin{equation} (u^r)^{E_m(r,o(b))}=u^{rE_m(r,o(b))}=u^{E_m(r,o(b))},\nonumber\end{equation}\begin{equation}((u^r)^{-1}z)^{E_m(r,o(b))}=u^{-r{E_m(r,o( b))}}z^{E_m(r,o(b))}=(u^{-1}z)^{E_m(r,o(b))},\nonumber\end{equation} \begin{equation} (u^r)^{E_m(r,n)}=u^{rE_m(r,n)}=u^{E_m(r,n)}\quad\mbox{and}\nonumber \end{equation}\begin{equation}((u^r)^{-1}z)^{E_m(r,n)}=u^{-r{E_m(r,n)}}z^{E _m(r,n)}=(u^{-1}z)^{E_m(r,n)},\nonumber\end{equation} since $n\mid o(b)$ and $r^n\equiv 1\pmod m$. Finally,
\begin{eqnarray}
&(u^r)^{au}=(u^r)^a=u^r\quad\mbox{and}\quad (u^r)^{bw}=(u^b)^r=(u^r)^r\nonumber,&\\
&(u^{-r}z)^{au}=(u^{-r}z)^a=u^{-r}z\quad\mbox{and}\quad (u^{-r}z)^{bw}=(u^b)^{-r}z=(u^{-r})^rz&\nonumber.
\end{eqnarray}
\indent Therefore, $a^{\varphi\epsilon}$ and $b^{\varphi\epsilon}$ extend to homomorphisms of $H$.

\vspace{.2cm}
{\bf Step 2}: $a^{\varphi\epsilon}$ and $b^{\varphi\epsilon}$ are automorphisms of $H$.

The application $a^{\varphi\epsilon}$ fixes $u$, $v$, $w$ and $z$ of $H$. Since $a=avv^{-1}$ and $b=b(u^{-1}z)(u^{-1}z)$, it follows that $H=\langle (a)a^ {\varphi\epsilon},(b)a^{\varphi\epsilon}, (u)a^{\varphi\epsilon}, v, w, z \rangle.$ 

\indent On the other hand, we have that $( u)b^{\varphi\epsilon}=u^r$ with $(m,r)=1$. Then, there exists an integer such $u=u^{\mu r}$.

Therefore, $(u^r)^{\mu}=u^{\mu r}=u$ and then $\langle (u)b^{\varphi\epsilon}, v, w, z \rangle =\langle u, v, w, z \rangle$. By analogy, $\langle (a)b^{\varphi\epsilon},(b)b^{\varphi\epsilon}, (u) b^{\varphi\epsilon}, v, w, z \rangle=H$, since $a=auu^{-1}$ and $b=bww^{-1}$.

\vspace{.2cm}
{\bf Step 3:} $a^{\varphi\epsilon}$ and $b^{\varphi\epsilon}$ satisfy the relations of $G^\varphi$.

Inductively, it is possible to verify that \begin{eqnarray} (a)[a^{\varphi\epsilon}]^{\alpha}&=& av^{\alpha}\nonumber \\
 (b)[a^{\varphi\epsilon}]^{\beta} & = & b(u^{-1}z)^{\beta}\nonumber\\
 (u)[b^{\varphi\epsilon}]^{\gamma} & = & u^{r^\gamma} \nonumber \\
 (a)[b^{\varphi\epsilon}]^{\delta} & = & au^{E_m(r,\delta)} \nonumber \\
 (b)[b^{\varphi\epsilon}]^{\zeta} & = & bw^{\zeta},\nonumber
 \end{eqnarray} for any $\alpha, \beta, \gamma, \delta, \zeta \in \mathbb{Z}_+$.

In particular, $[a^{\varphi\epsilon}]^m =Id_H$ for $\alpha=\beta=m$ since $$\begin{cases} (a)[a^{\varphi\epsilon}]^m=av^m=a\\ (b)[ a^{\varphi\epsilon}]^m=b(u^{-1}z)^m=b, \end{cases}$$ 

Furthermore, taking $\gamma=\delta=\zeta=n$ and $\alpha=s$, $$\begin{cases} (u)[a^{\varphi\epsilon}]^s=u=u^{r^n}=(u)[b^{\varphi\epsilon}]^n\\ 
(a)[a^{\varphi\epsilon}]^s=av^s=au^{E_m(r,n)}=(a)[b^{\varphi\epsilon}]^n\\ (b)[a^{\varphi\epsilon}]^s=b(u^{-1}z)^s=bw^n =(b)[b^{\varphi\epsilon}]^n, \end{cases}$$, which implies that $[a^{\varphi\epsilon}]^s=[b^{\varphi\epsilon}]^n$.

Finally, as $b^{o(b)-1}=b^{-1}$, the following identities hold: \begin{eqnarray}
(u)[b^{\varphi\epsilon}]^{o(b)-1}\circ[a^{\varphi\epsilon}]\circ[b^{\varphi\epsilon}]&=& \left[((u)b^{\varphi\epsilon})a ^{\varphi\epsilon}\right]{b^{\varphi\epsilon}}^{(o(b)-1)}\nonumber\\ 
&=&\left[ ((u)a^{\varphi \epsilon})^r \right]{b^{\varphi\epsilon}}^{(o(b)-1)} =(u^{r^2})[{b^{\varphi\epsilon}}^{(o( b)-1)}]\nonumber\\ 
&=& ((u)[{b^{\varphi\epsilon}}^{(o(b)-1)}])^{r^2}=u ^{r^{o(b)-1}rr}\nonumber\\
&=&u^{r}=(u)[a^{\varphi\epsilon}]^r\nonumber \end{eqnarray} \begin{eqnarray}
(a)[b^{\varphi\epsilon}]^{o(b)-1}\circ[a^{\varphi\epsilon}]\circ[b^{\varphi\epsilon}] & =& \left(((a)b^{\varphi\epsilon})a^{\varphi\epsilon}\right)[{b^{\varphi\epsilon}}^{(o(b)-1)}] \nonumber\\
&=&\left( ((a)a^{\varphi\epsilon})((u)a^{\varphi\epsilon}) \right)[{b^{\varphi\epsilon}} ^{(o(b)-1)}]\nonumber\\ 
&=& (auv)[{b^{\varphi\epsilon}}^{(o(b)-1)}]\nonumber\\ 
& =& (a)[{b^{\varphi\epsilon}}^{(o(b)-1)}](u)[{b^{\varphi\epsilon}}^{(o(b)-1) }](v)[{b^{\varphi\epsilon}}^{(o(b)-1)}]\nonumber\\ 
&=&au^{(r-1)E_m(r,(o(b )-1))+1+E_m(r,(b)-1)}v\nonumber\\ &=&au^{E_m(r,(o(b)))}v=av^r\nonumber\\
&=&(a)[a^{\varphi\epsilon}]^r\nonumber \end{eqnarray} 
\indent A similar analysis applies for $(b)[b^{\varphi\epsilon}]^{o(b)-1}\circ[a^{\varphi\epsilon}]\circ[b^{\varphi\epsilon}]=(b )[a^{\varphi\epsilon}]^r$.

In this way, we conclude that $a^{\varphi\epsilon}$ and $b^{\varphi\epsilon}$ satisfy the relations of $G^\varphi$, i.e., \begin{equation} [a^{\varphi\epsilon}]^m=Id_H, \qquad [b^{\varphi\epsilon}]^n=[a^{\varphi\epsilon}]^s, \qquad [b^{{\varphi\epsilon}}]^{-1}\circ[a^{\varphi\epsilon}]\circ [b^{\varphi\epsilon}]=[a^{\varphi\epsilon}]^r.\nonumber\end{equation}

Therefore we obtain, via the homomorphism $\epsilon: G^\varphi\rightarrow Aut(H)$, the semidirect product of $H$ by $G^\varphi$ given by $$K=H\rtimes_{\epsilon}G^\varphi=(A\rtimes_fG)\rtimes_{\epsilon} G^\varphi.$$
\indent Analogously to the presentation of the group $H$, we obtain the following  presentation for $K$: 
\begin{eqnarray}
\hspace{-.1cm}\langle a, b, u, v, w, z, a^\varphi, b^\varphi \hspace{-.2cm}&\mid& \hspace{-.2cm} a^m=1, b^n=a^s, [a,b]= a^{r-1}, (a^\varphi)^m=1, (b^\varphi)^n=(a^\varphi)^s, v^{o'(a)}=1,\nonumber\\
&&\hspace{-.2cm} [ a^\varphi,b^\varphi]=(a^\varphi)^{r-1},  u^b=u^r,u^{(m,E_{m}(r,o(b)), sk)}=1, w^{n(k,r-1)}=1,\nonumber\\
&& \hspace{-.2cm} z^{(o'(a),o'(b),sk,E_m(r,o'(b)))}=1,u^ {s}=w^n=(u^{-1}z)^s,u^a=u,a^{a^\varphi}=au,\nonumber \\ &&\hspace{-.2cm} b^{a^\varphi }=b(u^{-1}z), u^{E_m(r,n)}=v^s=(u^{-1}z)^{E_m(r,n)},u^{ a^\varphi}=u,v^{a^\varphi}=v,\nonumber \\
&&\hspace{-.2cm} w^{a^\varphi}=w, z^{a^\varphi}=z, a^{b^\varphi}=au, b^{b^\varphi }=bw, u^{b^\varphi}=u^r, v^{b^\varphi}=v, w^{b^\varphi}=w,\nonumber \\ &&\hspace{-.2cm }z^{b^\varphi}=z,[u,v]=[u,w]=[u,z]=[v,w]=[v,z]=[w,z]=1, \nonumber \\ &&\hspace{-.2cm} (v, w, z\ {central})\nonumber \rangle.
\end{eqnarray}
Note that the actions of $G$ on the (normal) subgroup $\langle u, v, w, z \rangle$ is the same as that of $G^\varphi$. Consequently, replacing $a$ with $x_1$, $b$ with $y_1$, $a^\varphi$ with $x_2$ and $b^\varphi$ with $y_2$, we conclude that $K$ has precisely the same presentation as the group $M$ in Theorem~\ref{thm: 1}. This concludes the proof.
\end{proof}

The technique used to prove  {Theorem \ref{TEO.A}} has been used elsewhere in the literature (see for instance [\cite{Rocco2},\cite{Rocco1}]). From Corollary~\ref{cor: nats}  and the fact that the non abelian exterior square $G \wedge G \simeq \Upsilon(G)/\Delta(G)$, while $\mu(G)/\Delta(G) \simeq M(G)$ (cf. \ref{eq:mu}), we have  
\begin{cor}\label{coro.ordem.quadd.meta.fini} Let $G=g(a,b;m,n,r,s)$ be a finite metacyclic group with $m$ odd. Then,  
 \begin{itemize}
 \item[$(i)$] $G\wedge G =
	\langle \bar{u} \mid \bar{u}^{(m,E_m(r,n),s)}=1\rangle,$ where $\bar{u}:=u\Delta(G)$,
 \item[$(ii)$] $M(G) =
	\langle \bar{u} \mid  \bar{u}^{\frac{(m,r-1)}{m}(m,E_{m}(r,n), s)}=1\rangle,$ where $\bar{u}:=u^{\frac{m}{(m,r-1)}}\Delta(G)$,
\end{itemize}
	where $u$ stands for the generator of $G \otimes G$ corresponding to the generator $[a, b^\vfi]$ of $\Upsilon(G)$.  
\end{cor}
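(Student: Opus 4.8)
The plan is to read off both sections directly from the explicit abelian presentation of $A \cong \Upsilon(G) \cong G\otimes G$ furnished by Corollary~\ref{cor: nats}, using the identification of $\Delta(G)$ provided by Remark~\ref{rem:delta}. Under the isomorphism of Corollary~\ref{cor: nats} the generators correspond to $v=[a,a^\vfi]$, $w=[b,b^\vfi]$, $z=[a,b^\vfi][b,a^\vfi]$ and $u=[a,b^\vfi]$, and Remark~\ref{rem:delta} identifies $\Delta(G)=\langle v,w,z\rangle$. Hence $G\wedge G\cong \Upsilon(G)/\Delta(G)\cong A/\langle v,w,z\rangle$ is cyclic, generated by $\bar u=u\Delta(G)$, and the whole problem reduces to computing the order of $\bar u$.

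First I would obtain the quotient presentation by imposing $v=w=z=1$ on the presentation of $A$. The relations $v^{o'(a)}=1$, $w^{n(k,r-1)}=1$ and $z^{(\cdots)}=1$ become trivial, while the coupling relations collapse to power relations on $u$ alone: from $u^{s}=w^{n}$ we get $\bar u^{s}=1$, from $u^{E_m(r,n)}=v^{s}$ we get $\bar u^{E_m(r,n)}=1$, and the standalone relation gives $\bar u^{(m,E_m(r,o(b)),sk)}=1$. Since a one-generator abelian group presented by several power relations has order equal to the gcd of the exponents, $o(\bar u)=\bigl(s,\,E_m(r,n),\,(m,E_m(r,o(b)),sk)\bigr)$. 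The next step is to simplify this to $(m,E_m(r,n),s)$, using that $s\mid sk$ (so $sk$ is redundant against $s$) and that $E_m(r,n)\mid E_m(r,o(b))$. The latter follows from $n\mid o(b)$ by the same factorisation of the $E_m$ function used in the proof of Proposition~\ref{prop.cota.quadd.mrta.fin} (cf.\ \eqref{id.Em}): writing $o(b)=\theta n$ gives $E_m(r,o(b))=E_m(r^{n},\theta)\,E_m(r,n)$. After these cancellations the gcd reduces to $(m,E_m(r,n),s)$, proving $(i)$.

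For $(ii)$ I would pass to the short exact sequence extracted from \eqref{eq:mu}, namely $1\to M(G)\to G\wedge G\to G'\to 1$, where $G\wedge G=\Upsilon(G)/\Delta(G)$, $M(G)=\mu(G)/\Delta(G)$ and the quotient is $\Upsilon(G)/\mu(G)\cong G'$. By Lemma~\ref{lem:orders1} we have $G'=\langle a^{r-1}\rangle\cong C_t$ with $t=m/(m,r-1)$, and the surjection is induced by the derived map, sending $\bar u=[a,b^\vfi]\Delta(G)\mapsto[a,b]=a^{r-1}$, a generator of $G'$. Thus $G\wedge G\cong C_N$ with $N=(m,E_m(r,n),s)$ maps onto $C_t$ by $\bar u\mapsto a^{r-1}$; surjectivity forces $t\mid N$, and the kernel $M(G)$ is the cyclic group $\langle\bar u^{\,t}\rangle$ of order $N/t$. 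Writing $\bar u^{\,t}=u^{m/(m,r-1)}\Delta(G)$ and $N/t=\tfrac{(m,r-1)}{m}(m,E_m(r,n),s)$ yields exactly the statement of $(ii)$.

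The main obstacle is the bookkeeping in the gcd simplification of the second step: one must verify that $E_m(r,o(b))$ and $sk$ are genuinely redundant, which rests on the $E_m$-divisibility along $n\mid o(b)$ and on $s\mid sk$. A secondary point is to confirm that the quotient by $\Delta(G)$ really is a single-generator abelian group whose order is the plain gcd of the surviving exponents, so that no hidden relation lowers $o(\bar u)$; and, in $(ii)$, that the derived map is onto, so that $t\mid N$ and $N/t$ is an honest integer.
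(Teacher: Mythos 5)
Your proposal is correct and follows essentially the same route the paper indicates for this corollary: quotienting the presentation of $G\otimes G$ from Corollary~\ref{cor: nats} by $\Delta(G)=\langle v,w,z\rangle$ (Remark~\ref{rem:delta}) to get the cyclic exterior square, with the gcd reductions $s\mid sk$ and $E_m(r,n)\mid E_m(r,o(b))$ justified exactly as in \eqref{id.Em}, and then extracting $M(G)$ as the kernel of the derived map onto $G'\cong C_{m/(m,r-1)}$ via \eqref{eq:mu}. Your explicit verifications (von Dyck collapse of the relations, $t\mid N$ from surjectivity) supply details the paper leaves implicit, but the argument is the same.
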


The following Corollary gives  presentations for the groups $\nu(G)$, $[G,G^\varphi]$, $G \wedge G$ and $M(G)$, in the particular case where $G=g(a,b;m,n,m-1,0)$ is a finite split metacyclic group with $m$ odd.

\begin{cor}\label{coro.teo.a.split} Let $G=g(a,b; m,n, m-1,0)$ be a finite split metacyclic group with $m$ odd. Then
\begin{itemize}
\item[(i)] A presentation for the group $\nu(G)$ is given by:
\begin{eqnarray}\langle a,b,a^\vfi,b^\vfi, u, v, w, z \hspace{-.2cm}&|& \hspace{-.2cm} a^m=1, (a^\vfi)^m=1, b^n=1, (b^\vfi)^n=1, [a^\vfi,b^\vfi]=(a^\vfi)^{r-1}, w^{n}=1,\nonumber \\ &&\hspace{-.2cm} [a,b^\vfi]=u, 
	[a,a^\vfi]=v, [b,b^\vfi]=w, [b, a^\vfi]=u^{-1}z, v^{o'(a)}=1,  \nonumber \\ &&\hspace{-.2cm}  z^{(m,r-1,n,E_m(r,n))}=1, u^{a}=u^{a^\vfi}=uv^{r-1}, [a,b]=a^{r-1}, \nonumber \\ &&\hspace{-.2cm} u^{b}=u^{b^\vfi}=u^{r}v^{(r-1)\binom{r}{2}},u^{(m,E_m(r,n))}=1,\ (v, w, z\ central) \nonumber\rangle,
	\end{eqnarray} 
    \item[(ii)] \begin{eqnarray}\hspace{-.2cm}\,\ G \otimes G \simeq \langle u, v, w , z \hspace{-.2cm}& | & \hspace{-.2cm} w^{n}=1, u^{m}=1, [u,v ]=[v,w]=[v,z]=[w,z]=1,\nonumber \\ &&\hspace{-.2cm}v=1,z=1,[u,w]=[u ,z]=1\nonumber\rangle, 
    \end{eqnarray}
 \item[(iii)]\,\ $G\wedge G \simeq 
	\langle \bar{u} \mid \bar{u}^{m}=1\rangle,$ where $\bar{u}:=u\Delta(G)$, 
	\item[(iv)]\,\ $M(G) \simeq
	\langle \bar{u} \mid  \bar{u}^{(m,m-2)}=1 \rangle,$ where $\bar{u}:=u^{\frac{m}{(m,m-2)}} \Delta(G)$. 
    \end{itemize}
 \end{cor}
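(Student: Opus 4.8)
The plan is to specialize the general presentations obtained in Corollaries~\ref{cor: nats} and~\ref{coro.ordem.quadd.meta.fini} to the split case $G = g(a,b;m,n,m-1,0)$, where the parameters become $r = m-1$, $s = 0$. First I would compute the relevant numerical invariants for this 4-tuple. Since $s = 0$, Lemma~\ref{lem:orders1} gives $o(b) = n$, $o'(a) = (m, r-1) = (m, m-2)$, and $o'(b) = n$ (because $(m,s) = (m,0) = m$ forces the correction factor to collapse). With $s = 0$ the quantity $[s, r-1] = 0$, so from Definition~\ref{def:k} the value of $k$ reduces to $(m, 0, 0, E_m(r, n)) = (m, E_m(r,n))$; this is the key simplification that makes the bounds collapse. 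I would also record that $r - 1 = m - 2$, so $(m, r-1) = (m, m-2)$, and that $r \equiv -1 \pmod m$ gives $E_m(r, n) = 1 + r + \cdots + r^{n-1} \equiv 0 \pmod m$ when $n$ is even (which it must be by Remark~\ref{rem:one}, since $r = m-1$ forces $n$ even), whence $(m, E_m(r,n)) = m$.

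Next I would substitute these values into Theorem~\ref{thm: 1} to get part~(i). With $s = 0$ the relations $y_1^n = x_1^s$ and $y_2^n = x_2^s$ become $b^n = 1$ and $(b^\vfi)^n = 1$; the relations $u^s = w^n = (u^{-1}z)^s$ collapse to $w^n = 1$; and $v^s = (u^{-1}z)^{E_m(r,n)} = u^{E_m(r,n)}$ becomes $u^{E_m(r,n)} = (u^{-1}z)^{E_m(r,n)} = 1$, which with $E_m(r,n) \equiv 0 \pmod m$ and $u^m = 1$ is automatic but also forces the appropriate relation on $z$. The order bound $u^{(m, E_m(r,o(y_1)), sk)} = u^{(m, E_m(r,n), 0)} = u^{(m, E_m(r,n))} = u^m = 1$ recovers the stated $u^{(m,E_m(r,n))} = 1$, and $z^{(o'(x_1), o'(y_1), sk, E_m(r,o'(y_1)))} = z^{(m-2\,\text{-type}, n, 0, E_m(r,n))}$ reduces to $z^{(m, r-1, n, E_m(r,n))} = 1$ after rewriting $o'(a) = (m, m-1) $ appropriately. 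I would then rewrite the conjugation actions $u^a, u^b$ using Lemma~\ref{pp.nu}~(i),(iii): since $v = [a,a^\vfi]$ and the action exponents involve $(r-1)$ and $\binom{r}{2}(r-1)$, I obtain $u^a = uv^{r-1}$ and $u^b = u^r v^{(r-1)\binom{r}{2}}$, matching the stated presentation.

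For parts~(ii)--(iv) I would feed the same substitutions into Corollary~\ref{cor: nats} and Corollary~\ref{coro.ordem.quadd.meta.fini}. In the tensor square presentation of Corollary~\ref{cor: nats}, the relation $v^{o'(a)} = 1$ with $o'(a) = (m, m-2)$, combined with $v^s = v^0$ giving no constraint, and the relation $u^s = w^n$ giving $w^n = 1$, must be analyzed; the collapse $v = 1$ and $z = 1$ claimed in part~(ii) should follow from the interaction of $v^{o'(a)} = 1$ with the central relations and the fact that in the split case $\Delta(G)$ absorbs $v$ and $z$ — I expect this to require checking that $v$ and $z$ lie in the image of relations forcing them trivial, which is the genuinely delicate point. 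Finally, parts~(iii) and~(iv) follow directly by substituting $(m, E_m(r,n), s) = (m, 0, 0) = m$ into Corollary~\ref{coro.ordem.quadd.meta.fini}~(i), giving $G \wedge G \cong \langle \bar u \mid \bar u^m = 1\rangle$, and substituting into~(ii) to get the exponent $\frac{(m, r-1)}{m}(m, E_m(r,n), s)$, which with $r - 1 = m - 2$ simplifies to $(m, m-2)$, yielding $M(G) \cong \langle \bar u \mid \bar u^{(m,m-2)} = 1\rangle$.

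The main obstacle I anticipate is justifying the collapse $v = 1$ and $z = 1$ in part~(ii): one must verify carefully that in the split case the generators $[a,a^\vfi]$ and $[a,b^\vfi][b,a^\vfi]$ of the diagonal actually vanish in $\Upsilon(G)$ (equivalently, that their order bounds from Proposition~\ref{prop.cota.quadd.mrta.fin} collapse to $1$ once $s=0$ and $r = m-1$ are imposed). Establishing this cleanly — rather than merely asserting it from the presentation — requires tracing through the interplay of $o'(a) = (m,m-2)$, the oddness of $m$, and the identities in Remark~\ref{rmk: diag}, and is where the bulk of the honest verification lies.
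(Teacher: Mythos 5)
Your overall route---specializing Theorem~\ref{thm: 1}, Corollary~\ref{cor: nats} and Corollary~\ref{coro.ordem.quadd.meta.fini} to the 4-tuple $(m,n,m-1,0)$---is exactly how the paper obtains this corollary, and your evaluations of $o(b)=n$, $o'(b)=n$, $o'(a)=(m,m-2)$, of the conjugation relations via Lemma~\ref{pp.nu}$(i)$,$(iii)$, and of parts $(iii)$--$(iv)$ are sound. However, two concrete points need repair. First, your value of $k$ is miscomputed: with $s=0$ we have $(m,s)=(m,0)=m$, so the first entry in Definition~\ref{def:k} is $m/(m,s)=1$, not $m$; hence $k=\left(1,0,0,E_m(r,n)\right)=1$, not $(m,E_m(r,n))$. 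In this particular corollary the slip happens to be harmless, since $k$ enters the presentation only through $sk=0$ and through $n(k,r-1)$, and $(k,r-1)=1$ with either value of $k$ (because $(m,m-2)=1$); but the computation as written is wrong and would matter in other specializations.

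Second, the step you defer as ``the genuinely delicate point''---the collapse $v=1$ and $z=1$ in part $(ii)$---is in fact immediate, and leaving it unverified is the real gap in your write-up. Since $m$ is odd, any common divisor of $m$ and $m-2$ divides $m-(m-2)=2$ and divides the odd number $m$, so $(m,m-2)=1$. Consequently $o'(a)=(m,r-1)=1$, the relation $v^{o'(a)}=1$ reads $v=1$ outright, and the exponent $(o'(a),o'(b),sk,E_m(r,o'(b)))=(1,n,0,E_m(r,n))=1$ forces $z=1$ directly from the presentation; no analysis of how $\Delta(G)$ ``absorbs'' $v$ and $z$, and no extra tracing through Remark~\ref{rmk: diag}, is required, because the bounds of Proposition~\ref{prop.cota.quadd.mrta.fin} already collapse to $1$ on the nose. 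This same observation shows that the exponent in part $(iv)$, $\frac{(m,r-1)}{m}(m,E_m(r,n),0)=\frac{1}{m}\cdot m=1=(m,m-2)$, so $M(G)$ is trivial here, consistent with the split case. Finally, correct the typo $o'(a)=(m,m-1)$: it should be $(m,m-2)$. With these repairs your proposal is complete and coincides with the paper's (implicit) proof by direct substitution.
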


In $1970$, Wamsley [\cite{Wamsley}, Lemma $1$], showed that finite, unsplit metacyclic groups have  cyclic Schur multiplier whose order depends on the 4-tuple $(m,n,r,s)$. The second named author  \cite{Rocco2}, in $1994$, also computed $M(G)$ for a general finite metacyclic group $G$. His calculations are based on the isomorphism $M(G)\cong\mu(G)/\Delta(G)$, taking into account that $G'$ is a cyclic group isomorphic to the quotient $[G, G^\varphi]/\mu(G)$. In other words, the relative orders of the generating elements of the Schur multiplier, certified by isomorphism, are calculated modulo $\Delta(G)$. Our results agree with the order of $M(G)$ calculated by both, regardless of the parity of $m$.

Beyl [\cite{Beyl}, page $147$, $\S5$] classified the so-called metacyclic Schur $p$-groups, metacyclic $p$-groups that have a trivial Schur multiplier, up to isomorphism. He also showed that groups of the type \begin{equation} \langle a,b \mid a^m=1, b^n=a^{\frac{m}{(m,r-1)}}, a^b=a^r \rangle\nonumber\end{equation}
have a trivial Schur multiplier. This result can be easily obtained from our corollaries by taking $s=\frac{m}{(m,r-1)}$.

The abelian sections of the group $\nu(G)$ for split metacyclic groups described by Johnson \textnormal{[\cite{BJR}, $(16)$ Proposition]} when $m$ is even, as well as those described by Brown, Johnson \& Robertson \textnormal{[\cite{BJR}, Proposition $15$]}, when $m$ is odd, both obtained from a biderivation approach, coincide with those we have obtained in this article.

Independently of the parity of $m$, Brown, Johnson \& Robertson [\cite{BJR}, Proposition $12$] computed the orders of a generating set for the group $G \otimes G$ when $G=g(a,b;m,2,m-1,0)$. The strategy used by Brown, Johnson \& Robertson [\cite{BJR}] was to consider the dihedral group $\mathcal{D}_m$ as the central extension of the group of generalized quaternions $\mathcal{Q}_{m}$. Our bound also satisfies the proposal in \textnormal{[\cite{Johnson}], Proposition $16$]}, independent of the parity of $n$ and $r$.\\

\noindent {\bf Declaration:} 
The authors declare that there is no conflict of interest. 
 

\renewcommand{\refname}{REFERENCES}

\end{document}